\definecolor{webgreen}{rgb}{0,.5,0}
\definecolor{webbrown}{rgb}{.6,0,0}
\begin{document}


\theoremstyle{plain}
\newtheorem{thm}{Theorem}
\newtheorem{cor}[thm]{Corollary}
\newtheorem{lem}[thm]{Lemma}
\newtheorem{prop}[thm]{Proposition}

\theoremstyle{definition}
\newtheorem{defn}[thm]{Definition}
\newtheorem{exam}[thm]{Example}
\newtheorem{conj}[thm]{Conjecture}
\newtheorem{quest}[thm]{Question}

\theoremstyle{remark}
\newtheorem{rem}[thm]{Remark}

\begin{center}
\vskip 1cm{\LARGE\bf The Many Faces of the Kempner Number
}
\vskip 1cm
\large
Boris~Adamczewski\\
CNRS, Universit\'e de Lyon, Universit\'e Lyon 1\\
Institut Camille Jordan  \\
43 boulevard du 11 novembre 1918 \\
69622 Villeurbanne Cedex \\
France\\
\href{mailto:Boris.Adamczewski@math.univ-lyon1.fr}{\tt Boris.Adamczewski@math.univ-lyon1.fr} \\
\end{center}

\vskip .2 in

\theoremstyle{plain}
\newtheorem*{thm2}{Theorem BBCP} 
\newtheorem*{thmRo}{Roth's Theorem} 
\newtheorem*{thmRi}{Ridout's Theorem}  
\newtheorem*{thmSub}{Subspace Theorem} 
\newtheorem*{thmCZ}{Theorem CZ} 
\newtheorem*{thmAB}{Theorem AB1} 
\newtheorem*{thmF}{Folding Lemma} 
\newtheorem*{thmSh1}{Theorem Sh1} 
\newtheorem*{thmSh2}{Theorem Sh2} 
\newtheorem*{thmAB2}{Theorem AB2} 

\newtheorem{notn}{Assumption--Notation}

\newcommand{\KK}{K\langle\langle x_1,\ldots,x_n\rangle\rangle}
\newcommand{\KKn}{K[[x_1^{\mathbb{Z}},\ldots , x_n^{\mathbb{Z}}]]}
\newcommand{\KKxy}{K\langle\langle x_1,\ldots,x_n,y_1,\ldots y_n\rangle\rangle}
\newcommand{\KKxnyn}{K[[x_1^{\mathbb{Z}},\ldots , x_{n-1}^{\mathbb{Z}};y_1^{\mathbb{Z}},\ldots , y_{n-1}^{\mathbb{Z}} ]]}

\newcommand{\KKxyy}{K[[x_1^{\mathbb{Z}},\ldots , x_n^{\mathbb{Z}};y_n^{\mathbb{Z}} ]]}
\newcommand{\KKnn}{K\langle\langle x_1,\ldots,x_{n-1}\rangle\rangle}
\newcommand{\C}{\mathcal}
\newcommand{\F}{\mathfrak}
 
\begin{abstract}  
In this survey, we present five different proofs for the transcendence of
Kempner's number, defined by the infinite series 
$\sum_{n=0}^{\infty} \frac{1}{2^{2^n}}$. 
We take the opportunity to mention some interesting ideas 
and methods that are used for proving deeper results. We outline proofs for some of these results and also point out references 
where the reader can find all the details.  
\end{abstract}

\begin{center}
{\it \`A Jean-Paul Allouche, pour son soixanti\`eme anniversaire.}
\end{center}

\begin{flushright}
   {\it Le seul v\'eritable voyage, le seul bain de Jouvence, \\
   ce ne serait pas d'aller vers de nouveaux paysages,\\ 
   mais d'avoir d'autres yeux...} 

\smallskip
   
{Marcel Proust, {\it \`A la recherche du temps perdu}}

\end{flushright}

\section{Introduction}

Proving that a given real number is transcendental is usually an extremely difficult task. 
Even for classical constants like $e$ and $\pi$, the proofs are by
no means easy, and most mathematicians would 
be happy with a single proof of the transcendence of $e+\pi$ or  $\zeta(3)$.   
In contrast, this survey will focus on the simple series
$$
\kappa := \sum_{n=0}^{\infty} \frac{1}{2^{2^n}}  
$$
that can be easily proved to be transcendental. The first proof is 
due to Kempner \cite{Kempner} in 1916 and, in honor of this result, we refer to $\kappa$ as the 
{\it Kempner number}\footnote{The number $\kappa$ is sometimes erroneously called the Fredholm number 
(see, for instance, the discussion in 
\cite{Sha}).}.  
If the transcendence of $\kappa$ is not a real issue,
our aim is instead to look at  
the many faces of $\kappa$, which will lead us to give five different proofs of this fact.  
This must be (at least for the author) some kind of record,
even if we do not claim  this list of proofs to be exhaustive. 
In particular, we will not discuss Kempner's original proof.
Beyond the transcendence of $\kappa$, the different proofs we give all
offer the opportunity  to mention some interesting ideas and methods
that are used for proving deeper results. We outline proofs for some of
these results and also point out references where the reader can find
all the details.

The outline of the paper is as follows. In Section \ref{section: knight}, we start this survey  
with a totally elementary proof of the transcendence of the Kempner 
number, based on a digital approach. Quite surprisingly, a digital approach very much 
in the same spirit has a more striking consequence concerning the problem of finding good lower bounds for 
the number of non-zero digits among the first $N$ digits of the binary expansion of algebraic irrational numbers. 
In Section \ref{section: mahler}, we give a second proof that relies
on Mahler's method. We also take the opportunity
to discuss a little-known application 
of this method to transcendence in positive characteristic. Our third proof is a consequence  of a $p$-adic version of Roth's theorem due to Ridout. It 
is given in Section \ref{section: roth}. More advanced consequences of the Thue--Siegel--Roth--Schmidt method are then outlined. 
In Section \ref{section: folding}, we give a description of the continued fraction expansion of $\kappa$  which 
turns out to have interesting consequences. We present two of them, one
concerning a question of Mahler about the Cantor set  and the other
the failure of Roth's theorem in positive characteristic. Our last two
proofs rely on such a description and the Schmidt subspace theorem. They are 
given, respectively, in Sections \ref{sec: cf1} and \ref{sec: cf2}. 
The first one uses the fact that $\kappa$ can be well approximated by a familly of 
quadratic numbers of a special type, while the second one uses
the fact that $\kappa$ and $\kappa^2$ have very good rational 
approximations with the same denominators.
Both proofs give rise to deeper results that are described briefly. 
 
Throughout this paper, $\lfloor  x\rfloor$  and $\lceil x\rceil$
denote, respectively, the floor and the ceiling of the real number
$x$.  We also use the classical notation $f(n)\ll g(n)$ (or
equivalently $g(n)\gg f(n)$), which means that  there exists a positive
real number $c$, independent of $n$, such that  $f(n)< cg(n)$
for all sufficiently large integers $n$.

\section{An ocean of zeros}\label{section: knight}

We start this survey with a totally elementary proof of the
transcendence of the Kempner number, due to Knight \cite{Kn91}. This
proof, which is based on a digital approach, is also reproduced in the
book of Allouche and Shallit \cite[Chap.\ 13]{AS}.

\begin{proof}[First proof]
Set 
$$
f(x) := \sum_{n=0}^{\infty} x^{2^n}  ,
$$
so that $\kappa = f(1/2)$. For every integer $i\geq 0$, we let $a(n,i)$ denote
the coefficient of $x^n$ in the formal power series expansion of $f(x)^i$. 
Thus $a(n,i)$ is equal to the number of ways that $n$ can be written as a sum of $i$ powers of $2$, 
where different orderings are counted as distinct. 
For instance, $a(5,3)=3$ since 
$$
5 = 1+2+2=2+1+2=2+2+1  .
$$
 Note that for positive integers 
$n$ and $i$, we clearly have 
\begin{equation}
\label{eq: ank}
a(n,i) \leq (1+\log_2 n)^{i}  .
\end{equation} 
The expression 
$$
\kappa^{i} = f(1/2)^{i} = \sum_{n=0}^{\infty} \frac{a(n,i)}{2^n}
$$
can be though of as a ``fake binary expansion" of $\kappa^{i}$ in which carries have not been yet performed.

Let us assume, to get a contradiction, that $\kappa$ is an algebraic number. 
Then there exist integers $a_0,\ldots,a_d$, with $a_d>0$, such that 
$$
a_0 + a_1 \kappa+\cdots + a_d\kappa^d  =0  .
$$
Moving all the negative coefficients to the right-hand side,
we obtain an equation of the form
\begin{equation}\label{eq: fake}
 a_{i_1}\kappa^{i_1} + \cdots + a_{i_r}\kappa^{i_r} + a_d\kappa^{d}  =  b_{j_1}\kappa^{j_1} + \cdots + b_{j_s}\kappa^{j_s}  ,
\end{equation}
where $r+s=d$, $0\leq i_1<\cdots <i_r<d$, $0\leq j_1<\cdots <j_s$, and coefficients on both sides are nonnegative. 

Let $m$ be a positive integer and set $N := (2^{d}-1)2^{m}$, so that the binary expansion of $N$ is given by
$$
(N)_2 = \underbrace{1\cdots 1}_{d}\; \underbrace{0\cdots 0}_{m}  .
$$
Then for every integer $n$ in the interval $I := [N - (2^{m-1}-1) ,  N + 2^{m}-1]$ and every integer $i$, $0\leq i\leq d$, 
we have 
$$
a(n,i) = \left\{
    \begin{array}{ll}
        d!, & \mbox{if } n=N \mbox{ and } i=d; \\
        0,& \mbox{otherwise.}
    \end{array}
\right.
$$
Indeed, every $n\not=N\in I$ has more than $d$ nonzero digits in its binary expansion, while $N$ has exactly 
$d$ nonzero digits.

Now looking at Equality (\ref{eq: fake}) as an equality between two fake binary numbers, we observe that
\begin{itemize}

\item[$\bullet$] on the right-hand side, all fake digits with position in $I$ are zero (an ocean of zeros),

\item[$\bullet$] on the left-hand side, all fake digits with position in $I$ are zero except for the one in position $N$ 
that is equal to $a_dd!$ (an island).

\end{itemize}

Note that $d$ is fixed, but we can choose $m$ as large as we want. 
Performing the carries on the left-hand side of (\ref{eq: fake}) for sufficiently large $m$, we see that the fake digit $a_dd!$ will produce 
some nonzero binary digits in a small (independent of $m$) neighborhood of the position $N$. On the other hand,  the upper bound (\ref{eq: ank})  
ensures that, for sufficiently large $m$, carries on the right-hand side of (\ref{eq: fake}) will never reach this neighborhood of the position $N$. 
By uniqueness of the binary expansion, Equality (\ref{eq: fake}) is thus impossible. This provides a contradiction. 
\end{proof}

\subsection{Beyond Knight's proof} 
Unlike $\kappa$, which is a number whose binary expansion contains  
absolute oceans of zeros, it is expected that all algebraic irrational real numbers have essentially random binary expansions 
(see the discussion in Section \ref{section: roth}).  As a consequence,  if $\xi$ is an algebraic irrational number and 
if $\mathcal P(\xi,2,N)$ denotes the number of $1$'s 
among the first $N$ digits of the binary expansion of  $\xi$, we should have  
$$
\mathcal P(\xi,2,N) \sim \frac{N}{2}  \cdot
$$
Such a result seems to be out of reach of current approaches,
and to find  good lower bounds 
for $\mathcal P(\xi,2,N)$ remains a challenging problem. 

A natural (and naive) approach to study this question can be roughly described as follows: 
if the binary expansion of $\xi$ contains too many zeros among its first digits, then some partial sums 
of its binary expansion should provide very good rational approximations to $\xi$; 
but on the other hand, we know that algebraic irrationals  
cannot be too well approximated by rationals.   
More concretely, we can argue as follows.  
Let $\xi:= \sum_{i\geq 0}1/2^{n_i}$ be 
a binary algebraic number. Then there are integers $p_k$ 
such that  
$$
\displaystyle\sum_{i=0}^k \frac{1}{2^{n_i}}= \frac{p_k}{2^{n_k}} \;\;\mbox{ and }\;\;
\left \vert  \xi - \frac{p_k}{2^{n_k}} \right\vert < \frac{2}{2^{n_{k+1}}} \cdot
$$ 
On the other hand, since $\xi$ is algebraic,  given a positive $\varepsilon$, 
Ridout's theorem (see Section \ref{section: roth})  implies that 
$$
\left\vert \xi - \frac{p_k}{2^{n_k}} \right\vert > \frac{1}{2^{(1+\varepsilon)n_k}} ,
$$
for every sufficiently large integer $k$.   
This gives that  $n_{k+1} < (1 + \varepsilon) n_k+1$ for such  $k$. Hence,  for any positive number $c$, we have 
\begin{equation}\label{ri}
\mathcal P(\xi,2,N) >  c \log N ,
\end{equation}
for every  sufficiently large $N$.

Quite surprisingly, a digital approach very much in the same spirit as Knight's  proof of the transcendence of $\kappa$ led 
Bailey, J. M. Borwein, Crandall, and Pomerance \cite{BBCP04} to obtain the following significant improvement of 
(\ref{ri}).

\begin{thm2}
 Let $\xi$ be an algebraic real number of degree $d\geq 2$. Then there exists an explicit positive number $c$ such that
\begin{equation*}
\mathcal P(\xi,2,N) > cN^{1/d}  ,
\end{equation*} 
for every  sufficiently large $N$.  
\end{thm2}

\begin{proof}[(Sketch of) proof]  We do not give all the details, for which we refer the reader to \cite{BBCP04}.  
Let $\xi$ be an algebraic number of degree $d\geq 2$, for which we assume that 
\begin{equation}
\label{eq: H}
\mathcal P(\xi,2,N) < c N^{1/d}  ,
\end{equation} 
for some positive number $c$. 
Let $a_0,\ldots,a_d$, $a_d>0$ such that 
$$
a_0+a_1\xi +\cdots +a_d\xi^d=0  .
$$
Let $\sum_{i\geq 0} 1/2^{n_i}$ denote the binary expansion of $\xi$ and set $f(x) := \sum_{i\geq 0} x^{n_i}$. We also 
let $a(n,i)$ denote the coefficient of $x^n$ in the power series expansion of $f(x)^i$.  
Without loss of generality we can assume that $n_0=0$. 
This assumption is important, in fact, for it ensures that 
$$
a(n,d-1)=0 \implies a(n,i)=0, \mbox{ for every }i, 0\leq i\leq d-1 .
$$   
Set $T_i(R):=\sum_{m\geq 1} a(R+m,i)/2^m$ and $T(R):=\sum_{i=0}^d a_iT_i(R)$. 
A fundamental remark is that $T(R)\in\mathbb Z$. 
Let $N$ be a positive integer and set $K:=\lceil 2d\log N\rceil$. Our aim is now to estimate the quantity 
$$
\sum_{R=0}^{N-K} \vert T(R)\vert  .
$$

\noindent{\it Upper bound.}  We first note that  
\begin{equation}\label{eq: ineq}
a(n,i)\leq {n+i-1\choose i-1} \mbox{ and } 
\sum_{R=0}^Na(R,i) \leq \mathcal P(\xi,2,N)^{i}  .
\end{equation}
 Using these inequalities, it is possible to show that  
\begin{eqnarray}\
\sum_{R=0}^{N-K} T_i(R) & = & \sum_{m=1}^{\infty} 2^{-m} \sum_{R=0}^{N-K} a(R+m,i)\nonumber  \\ 
&<& \sum_{R=0}^{N} a(R,i) + 2^{-K} \sum_{R=K}^N T_i(R) \nonumber \\
 & \leq & \mathcal P(\xi,2,N)^i+1  ,\nonumber 
\end{eqnarray}
for $N$ sufficiently large. We thus obtain that  
\begin{eqnarray}\label{eq: maj}
\sum_{R=0}^{N-K} \vert T(R)\vert & \leq & \sum_{i=1}^d \vert a_k\vert \left(\mathcal P(\xi,2,N)^{i}+1\right) \nonumber \\ 
&\leq &   a_dc^dN + O(N^{1-1/d})  .
\end{eqnarray}

\noindent{\it Lower bound.} We first infer from (\ref{eq: H}) and (\ref{eq: ineq}) that  
$$
\mbox{Card} \left\{ R \in [0,N] \mid a(R,d-1)>0\right\} < c^{d-1}N^{1-1/d}  .
$$
Let  $0=R_1<R_2<\cdots <R_M$ denote the elements of this set, so that $M<c^{d-1}N^{1-1/d}$. 
Set also $R_{M+1}:=N$. Then 
$$
\sum_{i=1}^M(R_{i+1}-R_i) = N .
$$
Let $\delta >0$ and set 
$$
\mathcal I := \left\{ i \in [0,M] \mid R_{i+1}-R_i \geq \frac{\delta}{3}c^{1-d}N^{1/d} \right\}  .
$$
Then we have 
\begin{equation}\label{eq: min1}
\sum_{i\in \mathcal I} \left( R_{i+1} - R_i\right)\geq \left(1-\frac{\delta}{3}\right) N  .
\end{equation}
Now let $i\in \mathcal I$. 
Note that Roth's theorem (see Section \ref{section: roth}) allows us to control the size of  blocks 
of consecutive zeros that may occur in the binary expansion of $\xi$. 
Concretely, it ensures the existence of an integer 
$$
j_i\in\left( \frac{1}{2+\delta/2}(R_{i+1}-R_i-d\log N), (R_{i+1}-R_i-d\log N)\right)
$$ 
such that $a(j_i,1)>0$. Thus $a(R_i+j_i,d)>0$ since by assumption $n_0=0$, and then 
a short computation gives that  $T(R_i+j_i-1)>0$.  

By definition, $a(R,d-1)=0$ for every $R\in (R_{i},R_{i+1})$ and thus  
$a(R,i)=0$ for every $R\in (R_i,R_{i+1})$ and every $i\in[0,d-1]$. 
For such integers $R$, a simple computation gives 
$$
T(R-1) = \frac{1}{2}T(R) +\frac{1}{2} a_da(R,d)  
$$
and thus $T(R)>0$ implies $T(R-1)>0$. 
Applying this argument successively to $R$ equal to 
$R_i+j_i-1,R_i+j_i-2,\ldots, R_i+1$, 
we finally obtain that $T(R)>0$ for every integer $R\in [R_i,R_i +j_i)$.  
The number of integers $R\in [0,N]$ such that $T(R)>0$ is thus at least equal to 
$$
\sum_{i\in\mathcal I} \frac{1}{2+\delta/2}\left( R_{i+1}-R_i-d\log N\right)  ,
$$
which, by (\ref{eq: min1}),  is at least equal to $(1/2 -\delta/3)N$ for sufficiently large $N$. 
Since $T(R)\in \mathbb Z$, we get that  
\begin{equation}
\label{eq: min2}
\sum_{R=0}^{N-K} \vert T(R)\vert \geq \left(\frac{1}{2}-\frac{\delta}{3}\right)N  ,
\end{equation}
for sufficiently large $N$.

\noindent {\it Conclusion.} For sufficiently large $N$, Inequalities (\ref{eq: maj}) and (\ref{eq: min2}) are incompatible  as soon as 
$c\leq ((2+\delta)a_d)^{-1/d}$.  Thus, choosing $\delta$ sufficiently small, this proves the theorem for any choice of 
$c$ such that $c<(2a_d)^{-1/d}$.   
\end{proof}

We end this section with a few comments on Theorem BBCP.

\begin{itemize}

\item[$\bullet$]  
It is amusing to note that replacing Roth's theorem by Ridout's theorem in this proof 
only produces a minor improvement: 
the constant $c$ can be replaced by a slightly larger one (namely by any $c< a_d^{-1/d}$).

\item[$\bullet$] A deficiency of Theorem BBCP is that it is not effective: it does not give an explicit integer $N$ above which the lower bound 
 holds. This comes from the well-known fact that Roth's theorem is itself ineffective. 
The authors of  \cite{AdFa} show that one can replace  
Roth's theorem 
by the much weaker Liouville inequality to derive an effective version of Theorem BBCP. 
This version is actually slightly weaker, because the constant $c$ is replaced by a smaller constant, but the proof becomes 
both totally elementary and effective.

\item[$\bullet$] Last but not least: Theorem BBCP immediately implies  the transcendence of the number 
$$
\sum_{n=0}^{\infty} \frac{1}{2^{\left\lfloor n^{\log\log n}\right\rfloor}} ,
$$
for which no other proof seems to be known!

\end{itemize}

\section{Functional equations}\label{section: mahler}

Our second proof of the transcendence of $\kappa$ follows a classical approach due to Mahler. 
In a series of three papers \cite{Mah29,Mah30A,Mah30B} published in 1929 and 1930, Mahler initiated a 
totally new direction in transcendence theory.   {\it Mahler's method} 
aims to prove transcendence and algebraic independence of values at algebraic points of 
locally analytic functions satisfying certain type of functional equations. 
In its original form, it concerns equations of the form   
\begin{equation*}
f(x^k) = R(x,f(x))  , 
\end{equation*}
where $R(x,y)$ denotes a bivariate rational function with coefficients in a number field.   
In our case, we consider the function 
$$
f(x) := \sum_{n=0}^{\infty} x^{2^n}  ,
$$
and we will use the fact that it is  analytic in the open unit disc and satisfies the following basic functional equation: 
\begin{equation}\label{eq: fe}
f(x^2) = f(x) - x  . 
\end{equation}

Note that we will in fact prove much more than the transcendence of $\kappa=f(1/2)$, for we will obtain the transcendence of $f(\alpha)$ 
for every nonzero algebraic number $\alpha$ in the open unit disc.  This is a typical advantage when using Mahler's method. 
Before proceeding with the proof we need to recall
a few preliminary results. 

\medskip

\noindent {\it Preliminary step 1.} The very first step of Mahler's method consists in showing that the function $f(x)$ is transcendental 
over the field of rational function $\mathbb C(x)$.  There are actually several ways to do that.  Instead of giving an elementary but {\it ad hoc} proof, 
we prefer to give the following general statement  that turns out to be useful in this area. 

\begin{thm}
Let $(a_n)_{n\geq 0}$ be an aperiodic sequence with values in a finite subset of $\mathbb Z$. 
Then $f(x)=\sum_{n\geq 0}a_nx^n$ is transcendental over $\mathbb C(x)$.   
\end{thm}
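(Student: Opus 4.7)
The plan is to derive the statement from a classical theorem of Fatou (1906): if $g(x)\in\mathbb Z[[x]]$ is algebraic over $\mathbb C(x)$ and has radius of convergence at least $1$, then $g$ is in fact a rational function belonging to $\mathbb Q(x)$. In our situation the coefficients $a_n$ lie in a fixed finite subset of $\mathbb Z$, so they are uniformly bounded, and the radius of convergence of $f$ is at least $1$. Suppose for contradiction that $f$ is algebraic over $\mathbb C(x)$. Fatou's theorem would then force $f=P/Q$ for coprime polynomials $P,Q\in\mathbb Q[x]$ with $Q(0)\neq 0$.

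From this rationality I would deduce that $(a_n)$ must be eventually periodic, which contradicts the aperiodicity hypothesis. Writing $Q(x)=q_0+q_1x+\cdots+q_dx^d$ with $q_0\neq 0$, the identity $Q(x)f(x)=P(x)$ yields a linear recurrence $q_0 a_n+q_1 a_{n-1}+\cdots+q_d a_{n-d}=0$ valid for every $n>\deg P$. For such indices the block $(a_n,a_{n+1},\ldots,a_{n+d-1})$ takes at most finitely many values, so by the pigeonhole principle there exist $n_1<n_2$ for which the two blocks agree. The forward recurrence then propagates the equality, giving $a_{n_1+k}=a_{n_2+k}$ for every $k\geq 0$; hence $(a_n)$ is eventually periodic, and we reach the desired contradiction.

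The main obstacle is Fatou's theorem itself. Its standard proof proceeds via Eisenstein's theorem on algebraic power series, which shows that only finitely many primes can appear in the denominators of the coefficients of an algebraic series with rational coefficients; combining this with the boundedness of $(a_n)$ and a careful $p$-adic growth estimate forces any such algebraic series to be rational. Since this is a well-known classical result, I would invoke it by citation rather than reproduce the argument in detail.
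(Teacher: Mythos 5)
Your proof is correct and follows essentially the same route as the paper's: the paper invokes the P\'olya--Carlson theorem (of which Fatou's 1906 dichotomy is the weaker precursor and exactly what is needed) to conclude that $f$ is either rational or transcendental, and then rules out rationality using the aperiodicity and boundedness of the coefficients. Your recurrence-plus-pigeonhole argument simply fills in the detail, left implicit in the paper, that a bounded integer sequence with rational generating function must be eventually periodic.
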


\begin{proof} Note that $f(x)\in\mathbb Z[[x]]$ has radius of convergence one and  the classical theorem of P\'olya--Carlson\footnote{Note that this 
argument could also be replaced by the use of two important results from automata theory: the Cobham and Christol theorems (see \cite{AS} and also Section \ref{allouche} for another use of Christol's theorem). } thus 
implies that $f(x)$ is either rational or transcendental. Furthermore, since the coefficients of $f(x)$ take only finitely many distinct values  
and form an aperiodic sequence, we see that $f(x)$ cannot be a rational function.  
\end{proof}

\noindent{\it Preliminary step 2.} We will also need to use Liouville's inequality as well as basic estimates about {\it height functions}.  
There are, of course, several notions of heights. The most convenient works with the absolute logarithmic 
Weil height that will be denoted by $h$.   We refer the reader 
to  the monograph of Waldschmidt \cite[Chap.\ 3]{Wa_book} for an excellent introduction to heights and 
in particular for a definition of $h$.   
Here we just recall a few  
basic properties of $h$ that will be used in the sequel. All are proved in \cite[Chap. 3]{Wa_book}.   
For every integer $n$ and every pair of algebraic numbers $\alpha$ and $\beta$, we have
\begin{equation}\label{eq: height1}
h(\alpha^n)=\vert n\vert h(\alpha) 
\end{equation}
and
\begin{equation}\label{eq: height1bis}
 h(\alpha+\beta) \leq h(\alpha)+h(\beta) +\log 2  .
 \end{equation}
More generally, if $P(X,Y)\in\mathbb Z[X,Y]\setminus\{0\}$ then
\begin{equation}\label{eq: height2}
h(P(\alpha,\beta)) \leq \log L(P) + (\deg_XP)h(\alpha) + (\deg_Y P)h(\beta)  ,
\end{equation}
where $L(P)$ denote the length of $P$, which is classically defined as the sum of the absolute values of the coefficients of $P$. 
We also recall Liouville's inequality: 
\begin{equation}\label{eq: liouville}
\log\vert \alpha\vert \geq - d h(\alpha)  ,
\end{equation}
for every nonzero algebraic number $\alpha$ of degree at most $d$.

We are now ready to give our second proof of  transcendence for $\kappa$. 

\begin{proof}[Second proof]
Given a positive integer $N$, we choose a nonzero bivariate polynomial $P_N\in \mathbb Z[X,Y]$ 
whose degree in both $X$ and $Y$ is at most $N$, 
and such that the order of vanishing at $x=0$ of the formal power series 
$$
A_N(x) := P_N(x,f(x))
$$
is at least equal to $N^2$. Note that looking for such a polynomial amounts to solving a homogeneous linear system over $\mathbb Q$ 
with $N^2$ equations and $(N+1)^2$ unknowns, which is of course always possible.  
The fact that $A_N(x)$ has a large order of vanishing at $x=0$ ensures that $A_N$ takes very small values around the origin. 
More concretely, for  every complex number $z$, $0\leq \vert z\vert <1/2$, we have 
\begin{equation}\label{eq: anmaj}
\vert A_n(z) \vert \leq c(N) \vert z\vert^{N^2}  ,
\end{equation}
for some positive $c(N)$ that only depends on $N$.

Now we pick an algebraic number $\alpha$, $0< \vert \alpha\vert <1$, and we assume that $f(\alpha)$ is also algebraic. 
Let $L$ denote a number field that contains both $\alpha$ and $f(\alpha)$ and let 
$d:=[L:\mathbb Q]$ be the degree of this extension. The functional equation (\ref{eq: fe}) implies the following  
for every positive integer $n$: 
$$
A_N(\alpha^{2^{n}}) = P_N(\alpha^{2^{n}},f(\alpha^{2^{n}})) = P_N\left(\alpha^{2^{n}},f(\alpha)- \sum_{k=0}^{n-1} \alpha^{2^k}\right) \in L  . 
$$
Thus $A_N(\alpha^{2^{n}})$ is always an algebraic number of degree at most $d$. 
Furthermore, we claim that $A_N(\alpha^{2^{n}})\not=0$ for all sufficiently large $n$. Indeed, the function $A_N(x)$ is 
analytic in the open unit disc and it is nonzero 
because $f(x)$ is transcendental over $\mathbb C(x)$, hence the identity theorem applies. 

Now, using (\ref{eq: height1}), (\ref{eq: height1bis}) and 
(\ref{eq: height2}), we obtain the following upper bound for the height of $A_N(\alpha^{2^n})$:
$$
\begin{array}{lll}
h(A_N(\alpha^{2^n})) & = &h(P_N(\alpha^{2^{n}},f(\alpha^{2^{n}})))\\  \\ 
& \leq & \log L(P_N) + Nh(\alpha^{2^n}) + Nh(f(\alpha^{2^{n}})) \\ \\
& = & \log L(P_N) + 2^nNh(\alpha) + Nh(f(\alpha) - \sum_{k=0}^{n-1} \alpha^{2^k}) \\ \\
& \leq & \log L(P_N) + 2^{n+1}Nh(\alpha) + Nh(f(\alpha)) + n\log 2  .\\ \\ 
\end{array}
$$
 From now on, we assume that $n$ is sufficiently large to ensure that 
$A_N(\alpha^{2^{n}})$ is nonzero and that $\vert \alpha^{2^n}\vert < 1/2$. 
Since $A_N(\alpha^{2^n})$ is a nonzero algebraic number of degree at most $d$, Liouville's inequality (\ref{eq: liouville}) 
implies that 
$$
\log \vert A_N(\alpha^{2^n})\vert \geq - d \left(\log L(P_N) + 2^{n+1}Nh(\alpha) + Nh(f(\alpha)) + n\log 2\right)  .
$$
On the other hand,  since $\vert \alpha^{2^n}\vert < 1/2$, Inequality (\ref{eq: anmaj}) gives that 
$$
\log  \vert A_N(\alpha^{2^n})\vert \leq \log c(N) +  2^nN^2\log \vert \alpha\vert  .
$$
We thus deduce that  
$$
 \log c(N) +  2^nN^2\log \vert \alpha\vert \geq - d \left(\log L(P_N) + 2^{n+1}Nh(\alpha) + Nh(f(\alpha)) + n\log 2\right) .
$$
Dividing both sides by $2^n$ and letting $n$ tend to infinity, we obtain
$$
N  \leq  \frac{2dh(\alpha)}{\vert \log \vert\alpha\vert \vert}  \cdot 
$$
Since $N$ can be chosen arbitrarily large independently of the choice of $\alpha$, this provides a contradiction and concludes the proof. 
\end{proof}

\subsection{Beyond Mahler's proof}\label{allouche} 

Mahler's method has, by now,
become a classical chapter in transcendence theory. 
As observed by Mahler himself, his approach allows one to deal with functions of several variables 
and systems of functional equations as well.  
It also leads to algebraic independence results, transcendence measures, measures of algebraic independence, and so forth.
Mahler's method was later developed by various authors, including Becker, Kubota, Loxton and 
van der Poorten, Masser, Nishioka, and T\"opfer, among  others. 
It is now known to apply to a variety of numbers defined by their decimal expansion, their continued fraction expansion, or as infinite products. 
For these classical aspects of Mahler's theory, we refer the reader to the monograph of Ku.\ Nishioka \cite{Ni_liv} and the references therein.

We end this section by pointing out another feature of Mahler's method that is unfortunately less well known. 
A major deficiency of Mahler's method is that, in contrast with the Siegel 
$E$- and $G$-functions,  
there is not a single classical transcendental constant that is known to be the value at an algebraic point of an analytic function 
solution to a Mahler-type functional equation. Roughly, this means that the most interesting complex numbers for number theorists 
seemingly remain beyond the scope of Mahler's method. However, 
a remarkable discovery of Denis is that Mahler's
method can be applied to prove transcendence and algebraic 
independence results involving
{\em periods of $t$-modules},
which are variants of the more classical periods
of abelian varieties, in the framework of the arithmetic of function 
fields of
positive characteristic. For a detailed discussion on
this topic, we refer the reader to the recent survey by Pellarin  \cite{Pel2}, and also \cite{Pel1}. 
Unfortunately, we cannot begin to do justice here to this interesting topic.
We must be content
to give only a hint about the proof of the transcendence of 
an analogue of $\pi$ using Mahler's method, and we hope that the interested reader will look for more in \cite{Pel1,Pel2}.  

Let $p$ be a prime number and $q=p^{e}$ be an integer power of $p$ with $e$ positive. 
We let $\mathbb F_q$ denote the finite field of $q$ elements,
$\mathbb F_q[t]$ the ring of polynomials with coefficients in $\mathbb F_q$,
and $\mathbb F_q(t)$ the field of rational functions.
We define an absolute value on $\mathbb F_q[t]$ by
$\vert P\vert = q^{\deg_t P}$ so 
that $\vert t\vert =q$. This absolute value naturally extends to $\mathbb F_q(t)$. We let $\mathbb F_q((1/t))$ denote the completion of 
$\mathbb F_q(t)$ for this absolute value and
let $C$ denote the completion of the algebraic closure of $\mathbb F_q((1/t))$ for the unique extension 
of our absolute value to the algebraic closure of $\mathbb F_q((1/t))$. 
Roughly, this allows to  replace the natural inclusions 
$$\mathbb Z\subset \mathbb Q\subset \mathbb R\subset \mathbb C$$ by the following ones   
$$F_q[t]\subset F_q(t)\subset F_q((1/t))\subset C .$$ The field $C$ is a good analogue for $\mathbb C$ and allows one to use some tools from 
complex analysis such as the identity theorem. 
In this setting, the formal power series
$$
\Pi := \prod_{n=1}^{\infty} \frac{1}{1- t^{1-q^n}} \in \mathbb F_q((1/t))\subset C
$$
can be thought of as an analogue of the number $\pi$. To be more precise, 
the Puiseux series
$$
\widetilde{\Pi} = t(-t)^{1/(q-1)} \prod_{n=1}^{\infty} \frac{1}{1- t^{1-q^n}} \in C
$$
is a fundamental period of Carlitz's module and, in this respect,  it appears to be a reasonable analogue for $2i\pi$.  
Of course, proving the transcendence of either $\Pi$ or $\widetilde{\Pi}$ over $\mathbb F_p(t)$ remains the same. 
As discovered by Denis \cite{Denis}, it is possible to deform the infinite product given in our definition of  $\Pi$,  in order to obtain the following  
``analytic function" 
$$
f_{\Pi}(x) :=  \prod_{n=1}^{\infty} \frac{1}{1- tx^{q^n}} 
$$
which converges for all $x\in C$ such that $\vert x\vert <1$. 
A remarkable property is that the function $f_{\Pi}(x)$ satisfies the following Mahler-type functional equation:
$$
f_{\Pi}(x^q) = \frac{f_{\Pi}(x)}{(1-tx^q)}  \cdot
$$
As the principle of Mahler's method also applies in this framework, one can prove along the same lines as in the proof we just gave 
for the transcendence of $\kappa$ that  $f_{\Pi}$ takes transcendental values at every nonzero algebraic point 
in the open unit disc of $C$. Considering the rational point $1/t$,  
we obtain the transcendence of $\Pi = f_{\Pi}(1/t)$. 

Note that there are many other proofs of the transcendence of $\Pi$. The first is due to Wade \cite{Wa} in 1941.  
Other proofs were then given by Yu  \cite{Yu} using the theory of Drinfeld
modules,  by Allouche \cite{All90} using automata theory and 
Christol's theorem, and by De Mathan \cite{DM}  using tools from Diophantine
approximation.

\section{$p$-adic rational approximation}\label{section: roth}

The first transcendence proof that graduate students in mathematics usually meet 
concerns the so--called Liouville number
$$
{\mathcal L}:= \sum_{n=1}^{\infty} \frac{1}{b^{n!}}  \cdot
$$
This series is converging so quickly that partial sums 
$$
\frac{p_n}{q_n} := \sum_{k=1}^{n}\frac{1}{b^{k!}}
$$ 
provide infinitely many extremely good rational approximations to $\mathcal L$, namely 
$$
\left\vert {\mathcal L} - \frac{p_n}{q_n} \right\vert   
 < \frac{2}{q_n^{n+1}}  \cdot
$$
In view of the classical Liouville inequality \cite{Li}, these approximations prevent $\mathcal L$ from being algebraic.  
Since  $\kappa$ is also defined by a lacunary series that converges very fast, it is tempting to try 
to use a similar approach. However, we will see that this requires much more sophisticated tools.

Liouville's inequality is actually enough to prove the transcendence for series such as
$\sum_{i=0}^{\infty} 1/2^{n_i}$,  
where $\limsup (n_{i+1}/n_i)=+\infty$, but it does not apply if $n_i$ has only an exponential growth like 
$n_i=2^{i}$, $n_i=3^{i}$ or $n_i=F_i$ (the $i$th Fibonacci number).  In the case where $\limsup (n_{i+1}/n_i)>2$, 
we can use Roth's theorem \cite{Ro55}.

\begin{thmRo}  Let $\xi$ be a real algebraic number and $\varepsilon$ 
be a positive real number. Then the inequality 
$$
\left\vert \xi - \frac{p}{ q} \right\vert <  \frac{1}{q^{2 + \varepsilon}} 
$$
has only a finite number of rational solutions $p/q$. 
\end{thmRo}

For instance, the transcendence of  the real number 
$\xi := \sum_{i=0}^{\infty} 1/2^{3^{i}}$
 is now a direct consequence of the inequality
$$
0<\left\vert \xi - \frac{p_n}{q_n} \right\vert   
 < \frac{2}{q_n^3}  ,
 $$
where $p_n/q_n := \sum_{i=0}^n 1/2^{3^{i}}$.  
However, the same trick does not apply to $\kappa$, for we  get that 
$$
\left\vert \kappa - \frac{p_n}{q_n} \right\vert   
 \gg \frac{1}{q_n^2}  ,
 $$
if $p_n/q_n := \sum_{i=0}^n 1/2^{2^{i}}$.

The transcendence of $\kappa$ actually requires  the following $p$-adic 
extension of Roth's theorem due to Ridout \cite{Ri57}.  
For every prime number $\ell$, 
we let $\vert \cdot \vert_\ell$ denote the $\ell$-adic absolute value 
normalized such that $\vert \ell \vert_\ell = \ell^{-1}$.

\begin{thmRi}
Let $\xi$ be an algebraic number and 
$\varepsilon$ be a positive real number. 
Let $S$ be a finite set of distinct prime numbers. 
Then the inequality 
$$
\left( \prod_{\ell \in S} \vert p \vert_\ell \cdot \vert q\vert_\ell \right) \cdot 
\left\vert \xi - \frac{p}{q} \right\vert < \frac{1}{q^{2+\varepsilon}}
$$
has only a finite number of rational solutions $p/q$. 
\end{thmRi}

With Ridout's theorem in hand, the transcendence of $\kappa$ can be easily deduced: we just have to take into account 
that the denominators of our rational approximations are powers of $2$.  

\begin{proof}[Third proof]
Let $n$ be a positive integer and 
set 
$$
\rho_n := \sum_{i=1}^{n}\frac{1}{2^{2^{i}}}.   
$$  
Then there exists an integer $p_n$ such that $\rho_n = p_n/q_n$ with 
$q_n = 2^{2^n}$.   Observe that
$$
\left\vert \kappa - \frac{p_n}{q_n} \right\vert  < \frac{2}{2^{2^{n+1}}} 
= \frac{2}{(q_n)^{2}},
$$
and let $S=\{2\}$.  
Then, an easy computation gives that 
 $$
 \vert q_n \vert_2  \cdot \vert p_n\vert_2  \cdot 
\left\vert \kappa - \frac{p_n}{q_n}\right\vert < \frac{2}{(q_n)^{3}}  \cdot
$$
Applying Ridout's theorem, we get that $\kappa$ 
is transcendental.  
\end{proof}

Of course there is no mystery, the difficulty in this proof is hidden in the proof of Ridout's theorem.

\subsection{Beyond Roth's theorem}

The Schmidt subspace theorem \cite{Schmidt80} provides a formidable multidimensional generalization of Roth's theorem. 
We state below a simplified version of the $p$-adic subspace theorem due to Schlickewei \cite{Sch77}, which  
turns out to be very useful for proving transcendence of numbers 
defined by their base-$b$ expansion or by their continued fraction expansion.  
Note that our  last two proofs of the transcendence of $\kappa$, given in Sections \ref{sec: cf1} and \ref{sec: cf2}, 
both rely on the subspace theorem.  
Several recent applications of this theorem can also be found in \cite{Bilu}.  

We recall that a {\it linear form} (in $m$ variables) is a homogeneous polynomial (in $m$ variables) of degree $1$.

\begin{thmSub}
Let $m\ge 2$ be an integer and $\varepsilon$ be a positive real number. 
Let $S$ be a finite set of distinct prime numbers. 
Let $L_1, \ldots , L_m$ 
be $m$ linearly independent linear forms in $m$ variables with real algebraic coefficients.
Then the set of solutions ${\bf x} = (x_1, \ldots, x_m)$
in $\mathbb Z^m$ to the inequality
$$
\left(\prod_{i=1}^m\prod_{\ell \in S} \vert x_i\vert_\ell \right) \cdot  \prod_{i=1}^m 
{\vert L_{i} ({\bf x}) \vert } \leq  
(\max\{|x_1|, \ldots , |x_m|\})^{-\varepsilon}
$$
lies in finitely many proper subspaces of $\mathbb Q^m$.
\end{thmSub}

Let us first see how the subspace theorem implies Roth's theorem. 
Let $\xi$ be a real algebraic number and 
$\varepsilon$ be a positive real number. Consider the two independent linear 
forms $\xi X - Y$ and $X$. 
The subspace theorem  implies that all the integer solutions 
$(p, q)$ to
\begin{equation}\label{Ch7:equation:rothlin}
\vert q \vert \cdot \vert q \xi - p\vert < \vert q\vert^{-\varepsilon} 
\end{equation}
are contained in a finite union of proper subspaces of $\mathbb Q^2$. 
There thus is a finite set of lines $x_1 X + y_1 Y = 0,\ \ldots ,\ 
x_t X + y_t Y = 0$ such that,
every solution $(p, q)\in\mathbb Z^2$ to (\ref{Ch7:equation:rothlin}), belongs to one of these lines. This means that the set of 
rational solutions $p/q$ to $\left\vert \xi - p/q \right\vert < q^{-2-\varepsilon}$ is finite, which is Roth's theorem.

\subsubsection{A theorem of Corvaja and Zannier} 
Let us return to the transcendence of $\kappa$. Given an integer $b\geq 2$ and letting $S$ denote the set of prime divisors of $b$,  
it is clear that the same proof also gives  the transcendence of $\sum_{n=0}^{\infty}1/b^{2^n}$. 
However, if we try to replace $b$ by a rational or an algebraic number, we may encounter new difficulties. As a good exercise, 
the reader can convince himself that the proof 
will still work with $b={5 \over 2}$ or $b={{17} \over 4}$,
but not with $b={3 \over 2}$ or $b={5 \over 4}$.   
Corvaja and Zannier  \cite{CZ} make clever use of the subspace theorem 
that allows them to overcome the problem in all cases. Among other results, they proved 
the following nice theorem.

\begin{thmCZ}
Let $(n_i)_{i\geq 0}$ be a sequence of positive integers such that 
$\liminf  n_{i+1}/n_i>1$  
and let $\alpha$, $0<\vert \alpha\vert<1$, be an algebraic number. 
Then the number 
$$
\sum_{i=0}^{\infty} \alpha^{n_i}
$$
is transcendental.
\end{thmCZ}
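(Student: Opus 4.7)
The plan is to adapt our third proof (which used Ridout's theorem for $\kappa$) by invoking the more powerful subspace theorem, applied to linear forms in $M+2$ variables for a suitable integer $M$ chosen large in terms of $\rho := \liminf_i n_{i+1}/n_i > 1$ and $\alpha$, in order to handle a general algebraic base $\alpha$. Since the main new ideas are already present in the rational case, I will sketch the argument for $\alpha = a/b$ with $\gcd(a,b)=1$ and $0 < |a| < b$; the general algebraic case uses an analogous number-field version of the subspace theorem but proceeds identically.

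Assume for contradiction that $\xi := \sum_{i=0}^\infty \alpha^{n_i}$ is algebraic, and set $\xi_k := \sum_{i=0}^k \alpha^{n_i}$. For each sufficiently large $k$, set $D_k := b^{n_{k+M}}$ and consider the integer point
\[ \mathbf{x}_k := \bigl(D_k,\ D_k\alpha^{n_{k+1}},\ \ldots,\ D_k\alpha^{n_{k+M}},\ D_k\xi_k\bigr)\in\mathbb{Z}^{M+2}, \]
together with the $M+2$ linearly independent linear forms
\[ L_0 := \xi X_0 - X_1 - \cdots - X_M - X_{M+1}, \qquad L_j := X_j \ \ (j = 1,\ldots,M+1). \]
Evaluated on $\mathbf{x}_k$, the form $L_0$ produces $D_k\sum_{i \geq k+M+1} \alpha^{n_i}$, which is very small. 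Let $S$ denote the finite set of primes dividing $ab$, so that $\alpha$ is an $S$-unit. The main obstacle of the proof is the technical estimate
\[ \Bigl(\prod_{i=0}^{M+1} \prod_{\ell \in S} |x_{k,i}|_\ell\Bigr) \prod_{i=0}^{M+1} |L_i(\mathbf{x}_k)| \;\leq\; H(\mathbf{x}_k)^{-\varepsilon}, \]
needed to apply the subspace theorem. A direct computation shows that the $S$-adic valuations of the coordinates $D_k\alpha^{n_{k+j}}$ cancel against the Archimedean sizes of the same (a manifestation of the product formula $\prod_v |\alpha|_v = 1$), and the whole product collapses to a quantity proportional to $b^{n_k}|\alpha|^{n_{k+M+1}}$, while $H(\mathbf{x}_k) \asymp b^{n_{k+M}}$. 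The hypothesis $\rho > 1$ then provides precisely the slack needed to choose $M$ large enough (and a positive $\varepsilon$) so that the displayed inequality holds for all sufficiently large $k$.

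Once the estimate is in place, the subspace theorem forces the points $\mathbf{x}_k$ into finitely many proper subspaces of $\mathbb{Q}^{M+2}$, so along a subsequence one obtains a nontrivial rational relation
\[ c_0 + c_1 \alpha^{n_{k+1}} + \cdots + c_M \alpha^{n_{k+M}} + c_{M+1}\xi_k = 0 \]
valid for infinitely many $k$. Letting $k \to \infty$ yields $c_0 + c_{M+1}\xi = 0$; subtracting this from the relation and then iteratively dividing by $\alpha^{n_{k+1}}, \alpha^{n_{k+2}}, \ldots$ (using $\rho > 1$ to discard higher-order tails as $k \to \infty$) successively forces $c_j = c_{M+1}$ for $j = 1, \ldots, M$. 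Re-substituting these common values collapses the relation to $c_{M+1} \sum_{i \geq k+M+1} \alpha^{n_i} = 0$, and since this tail is nonzero for all large $k$, one concludes $c_{M+1} = 0$, hence $c_j = 0$ for every $j$, contradicting the nontriviality of the subspace relation.
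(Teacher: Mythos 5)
Your proposal is correct and follows essentially the same route as the paper's sketch of the Corvaja--Zannier argument: a common denominator $b^{n_{k+M}}$ turns the first $M$ tail terms into integer $S$-unit coordinates, the product formula makes their contribution to the subspace inequality collapse, and the slack coming from $\liminf n_{i+1}/n_i>1$ lets one choose $M$ (hence the number of variables) large enough in terms of $\alpha$ --- exactly the mechanism the paper illustrates for $f(4/5)$ with four variables and the remark that $14$ are needed for $f(2012/2013)$. Your endgame (iteratively dividing the subspace relation by $\alpha^{n_{k+j}}$ to force all coefficients to vanish) is organized slightly differently from the paper's, which invokes irrationality and a two-term $S$-unit equation, but it is an equivalent and equally valid way to reach the contradiction.
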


Of course,  we recover the fact, already proved in Section \ref{section: mahler} by Mahler's method, that  
the function $f(x)= \sum_{n=0}^{\infty} x^{2^n}$ takes transcendental values at every nonzero algebraic point in 
the open unit disc. The proof of Theorem CZ actually requires an extension of the $p$-adic subspace theorem to number fields 
(the version we gave is sufficient for rational points). 
We also take the opportunity to mention that the main result of \cite{Ad04} is actually a consequence of Theorem 4 
in \cite{CZ}.

In order to explain the idea of Corvaja and Zannier we somewhat oversimplify the situation by considering only 
the example of $f({4 \over 5})$.  We refer the reader to \cite{CZ} for a complete proof. 
We assume that $f({4 \over 5})$ is algebraic and we 
aim at deriving a contradiction.  
A simple computation gives that 
$$
\left\vert f\left({4 \over 5}\right) - \sum_{k=0}^n \left(\frac{4}{5}\right)^{2^k} \right\vert < 2 \left(\frac{4}{5}\right)^{2^{n+1}}  ,
$$ 
for every nonnegative integer $n$.
This inequality can obviously be rephrased as 
$$
\left\vert f\left({4 \over 5}\right) - \sum_{k=0}^n \left(\frac{4}{5}\right)^{2^k} -  \left(\frac{4}{5}\right)^{2^{n+1}} - \left(\frac{4}{5}\right)^{2^{n+2} }\right\vert 
< 2 \left(\frac{4}{5}\right)^{2^{n+3}}  ,
$$
but the subspace theorem will now take care of the fact that  the last two terms on the left-hand side are $S$-units (for $S=\{2,5\}$). 
Multiplying by $5^{2^{n+2}}$, we obtain that 
$$
\left\vert 5^{2^{n+2}}f\left({4 \over 5}\right) - 5^{2^{n+2}-2^n}p_n - 4^{2^{n+1}}5^{2^{n+1}} - 4^{2^{n+2}}\right\vert < 2 \left(\frac{4}{5}\right)^{2^{n+3}}5^{2^{n+2}} ,
$$
for some integer $p_n$. 
Consider the following four linearly independent linear forms
with real algebraic coefficients:
$$
\begin{array}{ll}
L_1 (X_1,X_2,X_3,X_4) = & f({4 \over 5})X_1-X_2-X_3-X_4,  \\
L_2 (X_1,X_2,X_3,X_4) = & X_1, \\
L_3 (X_1,X_2,X_3,X_4) = & X_3 , \\
L_4(X_1,X_2,X_3,X_4)=& X_4. 
\end{array}
$$
For every integer $n \geq 1$, consider the integer quadruple
$$
{\bf x}_n = (x_1^{(n)},x_2^{(n)},x_3^{(n)},x_4^{(n)}):=\left( 5^{2^{n+2}} , 5^{2^{n+2}-2^n}p_n ,  4^{2^{n+1}}5^{2^{n+1}} , 4^{2^{n+2}} \right)  .
$$
Note that $\Vert {\bf x}_n\Vert_{\infty}\leq 5\cdot 5^{2^{n+2}}$. 
Set also $S=\{2,5\}$. Then a simple computation shows that 
$$
\left(\prod_{i=1}^4\prod_{\ell \in S} \vert x_i^{(n)}\vert_\ell \right) \cdot  \prod_{i=1}^4 
{\vert L_{i} ({\bf x}_n) \vert } \leq  2 \left(\frac{4^8}{5^7}\right)^{2^{n}}  < 
 \Vert {\bf x}_n\Vert_{\infty}^{-\varepsilon}  ,
$$
for some $\varepsilon>0$. 
We then infer from the subspace theorem 
 that all points ${\bf x}_n$ lie in a finite number of proper subspaces of $\mathbb Q^4$. 
Thus, there exist a nonzero integer quadruple $(x,y,z,t)$ and
an infinite set of distinct positive integers ${\mathcal N}$ such that
\begin{equation}\label{eq: plan} 
 5^{2^{n+2}} x +  5^{2^{n+2}-2^n}p_n y + 4^{2^{n+1}}5^{2^{n+1}} z + 4^{2^{n+2}} t = 0,  
\end{equation}
for every $n$ in ${\mathcal N}$.
Dividing (\ref{eq: plan}) by $ 5^{2^{n+2}}$ and letting  
$n$ tend to infinity along ${\mathcal N}$, we get that 
$$
x + \kappa y=0  .
$$
Since $\kappa$ is clearly irrational, this implies that $x=y=0$.  But then Equality (\ref{eq: plan})  becomes 
$$
4^{2^{n+1}}5^{2^{n+1}} z = -4^{2^{n+2}} t  ,
$$
which is impossible for large $n\in\mathcal N$ unless $z=t=0$  (look at, for instance, the $5$-adic absolute value). 
This proves that $x=y=z=t=0$, a contradiction.

Note that  the proof of the transcendence of $f(\alpha)$, for every algebraic number $\alpha$ with $0<\vert \alpha\vert <1$,  
actually requires the use of the subspace theorem with an arbitrary large number of variables (depending on $\alpha$). 
For instance, we need $14$ variables to prove the transcendence of  
 $f(2012/2013)$.


\subsubsection{The decimal expansion of algebraic numbers} 
The decimal expansion of real numbers such as $\sqrt 2$, $\pi$, and $e$ 
appears to be quite mysterious and, for a long time, has baffled  
mathematicians.  After the pioneering work of \'E. Borel \cite{Borel1,Borel2}, 
most mathematicians expect that all  
algebraic irrational numbers are {\it normal numbers},  even if this conjecture currently seems to be out of reach.       
Recall that a real number is normal if 
for every integer $b\geq 2$ and every positive integer $n$,
each one of the $b^n$ blocks of digits of length $n$ 
occurs in its base-$b$ expansion with the same frequency.  
We end this section by pointing out an application of the $p$-adic subspace theorem related to 
this problem.

Let $\xi$ be a real number and $b\geq 2$ be a positive integer. Let $(a_n)_{n\geq -k}$ denote 
the base-$b$ expansion of $\xi$, that is,  
$$
\xi= \displaystyle\sum_{n\geq -k} \frac{a_n}{b^n} = a_{-k}\cdots a_{-1}a_0{\scriptscriptstyle \bullet} a_1a_2\cdots  .
$$
Following Morse and Hedlund \cite{HM}, we define the 
{\it complexity function} of $\xi$ with respect to the base $b$ as 
the function that associates  with each positive integer $n$ the positive integer  
$$
p(\xi,b,n) := \mbox{Card} \{(a_j,a_{j+1},\ldots, a_{j+n-1}), \; j \geq 1\}.
$$
A normal number thus has the maximum possible complexity in every integer base, 
that is, $p(\xi,b,n) = b^n$ for every 
positive integer $n$ and every integer $b\geq 2$. One usually  expects 
such a high complexity for numbers like $\sqrt 2$, $\pi$, and $e$.   Ferenczi and Mauduit \cite{FM} gave the first lower bound 
for the complexity of  all algebraic irrational numbers by means of Ridout's theorem.  
More recently, Adamczewski and Bugeaud \cite{AdBuAnnals} 
use the subspace theorem to obtain the following significant
improvement of their result.

\begin{thmAB}
 Let $b \geq 2$ be an integer and $\xi$ be an algebraic irrational number.  
Then 
\begin{equation*}\label{bfm}
\lim_{n\to \infty}  \frac{p(\xi,b,n)}{ n} = + \infty  .
\end{equation*}
\end{thmAB}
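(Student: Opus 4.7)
The plan is to argue by contradiction, combining a purely combinatorial extraction of repetitive patterns in the base-$b$ expansion with the $p$-adic Subspace Theorem, much as in the proof of Theorem CZ sketched above. Suppose that $\liminf_{n\to\infty} p(\xi,b,n)/n = c_0 < \infty$, write $\xi - \lfloor \xi \rfloor = \sum_{n \geq 1} a_n b^{-n}$, and set $\mathbf{a} := a_1 a_2 \cdots$.

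The first, and by far most delicate, step is a combinatorial lemma asserting that sub-linear complexity forces many long stammering prefixes. Concretely, one proves that there exist a real number $w > 1$ and infinitely many pairs of positive integers $(r_n, s_n)$ with $s_n \to \infty$ and $r_n \leq c_1\, s_n$ for some constant $c_1$ depending only on $c_0$, such that $\mathbf{a}$ starts with a block of the form $U_n V_n V_n'$, where $|U_n| = r_n$, $|V_n| = s_n$, and $V_n'$ is the prefix of $V_n$ of length $\lfloor (w-1)s_n \rfloor$. The argument is a pigeonhole on the Rauzy graph describing how factors of length $n$ extend to factors of length $n+1$; it is genuinely harder than the naive Morse--Hedlund argument, which would only produce $w = 1$.

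Each such stammering prefix yields an excellent $S$-adic rational approximation
\[
\xi_n := \frac{p_n}{b^{r_n}(b^{s_n}-1)}, \qquad \bigl|\xi - \xi_n\bigr| \ll b^{-(r_n + w s_n)},
\]
where $S$ denotes the set of prime divisors of $b$, the factor $b^{r_n}$ of the denominator is an $S$-unit, and $b^{s_n}-1$ is coprime to $S$. The next step is to feed these approximations into the $p$-adic Subspace Theorem: one chooses three linearly independent linear forms (for example $\xi X_1 - \xi X_2 - X_3$, $X_1$, and $X_2$) to evaluate at the integer triples $\mathbf{x}_n := (b^{r_n+s_n}, b^{r_n}, p_n) \in \mathbb{Z}^3$. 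Because of the $S$-unit structure of the first two coordinates, the weighted double product entering the subspace hypothesis is dominated by $|b^{r_n}(b^{s_n}-1)(\xi - \xi_n)| \leq b^{-(w-1)s_n}$, which beats any small fixed power of $\|\mathbf{x}_n\|_\infty \asymp b^{r_n + s_n}$ as soon as $w - 1$ is sufficiently large compared to $c_1$.

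The Subspace Theorem then confines the vectors $\mathbf{x}_n$ to finitely many proper subspaces of $\mathbb{Q}^3$, so that on an infinite subsequence a nontrivial relation $\alpha\, b^{r_n + s_n} + \beta\, b^{r_n} + \gamma\, p_n = 0$ holds with fixed $(\alpha, \beta, \gamma) \in \mathbb{Z}^3 \setminus \{0\}$. Dividing by $b^{r_n + s_n}$ and letting $n \to \infty$ gives $\alpha + \gamma\, \xi = 0$, which by the irrationality of $\xi$ forces $\alpha = \gamma = 0$; a brief $b$-adic inspection of what remains then forces $\beta = 0$ as well, a contradiction. The main obstacle in this scheme is, as emphasized above, the combinatorial extraction of stammering with a quantitatively controlled exponent $w - 1$ bounded away from zero; the Subspace Theorem step, while delicate to set up optimally, is essentially a variation on the Corvaja--Zannier template, and if one succeeded only with $w = 1$ no contradiction could be drawn.
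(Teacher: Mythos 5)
Your proposal is correct and follows essentially the same route as the paper's sketch: the same combinatorial extraction of stammering prefixes $U_nV_n^w$ with $w>1$ and $r_n\ll s_n$ (which both you and the paper treat as the key black-box lemma, proved in the cited references), the same rational approximations $p_n/(b^{r_n}(b^{s_n}-1))$, the same three linear forms evaluated at $(b^{r_n+s_n},b^{r_n},p_n)$, and the same endgame from the finitely many subspaces. One small imprecision: you do not need $w-1$ ``sufficiently large compared to $c_1$''---any fixed $w>1$ suffices, since one may take $\varepsilon$ of order $(w-1)/(c_1+1)>0$ in the Subspace Theorem; what is essential, as you correctly stress, is only that $w$ be bounded away from $1$ along the subsequence.
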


Note that Adamczewski \cite{Ad10} obtains a weaker lower bound  for some transcendental 
numbers involving the exponential function.  
For a more complete discussion concerning the complexity of the base-$b$ expansion of algebraic numbers, 
we refer the reader to \cite{Ad10,AdBuAnnals,AdBuCant,Miw09}. 

\begin{proof}[Hint of proof]  We only outline the main idea for proving 
Theorem AB1 and refer the reader 
to \cite{AdBuAnnals} or \cite{AdBuCant} 
for more details.  Let $\xi$ be an algebraic number and let us assume that 
\begin{equation}\label{eq: comp}
\liminf_{n\to \infty}  \frac{p(\xi,b,n)}{ n} < +\infty,
\end{equation}
for some integer $b\geq 2$. Our goal is thus to prove that $\xi$ is rational. Without loss of generality, we can assume that $0<\xi<1$.

Our assumption implies that the number of distinct blocks of digits of length $n$ in the base-$b$ expansion of $\xi$ is quite small 
(at least for infinitely many integers $n$).  Thus, at least some of these blocks of digits have to reoccur frequently, which forces the 
early occurrence  
of some repetitive patterns in the base-$b$ expansion of $\xi$. This rough idea can be formalized as follows. 
We first recall some notation from combinatorics on words. 
Let $V=v_1\cdots v_r$ be a finite word.   We let $\vert V\vert=r$ denote the 
length of  $V$. 
For any positive integer $k$, we write
$V^k$ for the word 
$$
\overbrace{V\cdots V}_{\mbox{$k$ times}} .
$$ 
More generally, for any positive real number
$w$,  $V^w$ denotes the word
$V^{\lfloor w \rfloor}V'$, where $V'$ is the prefix of
$V$ of length $\left\lceil(w-\lfloor w\rfloor)\vert V\vert\right\rceil$. 
With this notation, one can show that the assumption (\ref{eq: comp}) ensures the existence of a real number $w>1$ 
and of two infinite sequences of finite words 
$(U_n)_{n\geq 1}$ and $(V_n)_{n\geq 1}$ such that 
the base-$b$ expansion of $\xi$ begins with the block of digits $0{\scriptscriptstyle \bullet} U_nV_n^w$  for every positive integer $n$.  
Furthermore, if we set $r_n := \vert U_n \vert$ and $s_n := \vert V_n\vert$, we have that $s_n$ tends to infinity with $n$ and 
there exists a positive number $c$ such that 
$r_n/s_n <c$ for every $n\geq 1$.

This combinatorial property has the following Diophantine translation. 
For every positive integer $n \ge 1$,  
$\xi$ has to be close to the rational number with ultimately periodic base-$b$ expansion 
$$
0{\scriptscriptstyle \bullet} U_nV_nV_nV_n\cdots   .
$$
Precisely, one can show the existence of an  
integer $p_n$  such that
\begin{equation*}\label{Ch7:equation:pj}
\left\vert \xi - \frac{p_n}{ b^{r_n} (b^{s_n} - 1)} \right\vert
\ll  \frac{1 }{b^{r_n + w s_n}}  \cdot
\end{equation*}

Consider the following three linearly independent linear forms
with real algebraic coefficients:
$$
\begin{array}{ll}
L_1 (X_1, X_2, X_3) = & \xi X_1 - \xi X_2 - X_3,  \\
L_2 (X_1, X_2, X_3) = & X_1, \\
L_3 (X_1, X_2, X_3) = & X_2. 
\end{array}
$$
Evaluating them at the integer points 
$${\bf x}_n =(x_1^{(n)},x_2^{(n)},x_3^{(n)}):= (b^{r_n+s_n}, b^{r_n}, p_n)  ,
$$ 
we easily  obtain that 
$$
\left(\prod_{i=1}^3\prod_{p \in S} \vert x_i^{(n)}\vert_p \right) \cdot  
\prod_{i=1}^3 {\vert L_i ({\bf x}_n) \vert } \ll 
\left( \max\{b^{r_n + s_n}, b^{r_n}, p_n\} \right)^{-\varepsilon},
$$
where $\varepsilon := (\omega-1)/2(c+1) >0$ and $S$ denotes the set of
prime divisors of $b$.
We then infer from the subspace 
theorem  that all points ${\bf x}_n$
belong to a finite number of proper subspaces of $\mathbb Q^3$. 
There thus exist a nonzero integer triple $(x,y,z)$ and
an infinite set of distinct positive integers ${\mathcal N}$ such that
\begin{equation}\label{Ch7:equation:plan} 
x b^{r_n+s_n} + y b^{r_n} + z p_n  = 0 ,  
\end{equation}
for every $n$ in ${\mathcal N}$. 
Dividing (\ref{Ch7:equation:plan}) by $b^{r_n+s_n}$ and letting  
$n$ tend to infinity along ${\mathcal N}$, we get that 
$$
x+\xi z = 0  ,
$$
as $s_n$ tends to infinity. 
Since $(x,y,z)$ is a nonzero vector,  this implies that $\xi$ is a rational 
number. This ends  the proof.
\end{proof}
\section{Interlude: from base-$b$ expansions to continued fractions}\label{section: folding}

It is usually very difficult to extract any information about the continued fraction expansion of a given irrational real number 
from its decimal or binary expansion and  vice versa. For instance, $\sqrt 2$, $e$, and $\tan 1$ all have 
a very simple continued fraction expansion, while they are expected to be normal and thus  should  have essentially 
random expansions in all integer bases.  
In this section, we shall give an exception to this rule:  
our favorite binary number $\kappa$ has a predictable continued fraction expansion that enjoys remarkable 
properties involving both repetitive and symmetric patterns (see Theorem Sh1 below). Our last two proofs of transcendence for $\kappa$, 
given in Sections \ref{sec: cf1} and \ref{sec: cf2}, both rely on Theorem Sh1.

For an introduction to continued fractions, the reader is referred to standard books such as 
Perron\cite{Perron}, Khintchine \cite{Kh},
or Hardy and Wright \cite{HaWr}. 
We will use the classical notation for finite or infinite continued fractions
$$
\frac{p}{q}
= a_0 + \cfrac{1}{a_1+\cfrac{1}{a_2+\cfrac{1}{\ddots+\cfrac{1}{a_n}}}}
= [a_0, a_1,\cdots, a_n]
$$
resp., 
$$
\xi =
a_0 + \cfrac{1}{a_1+\cfrac{1}{a_2+\cfrac{1}{\ddots+\cfrac{1}
{a_n + \cfrac{1}{\ddots}}}}}  = [a_0, a_1,\cdots, a_n, \cdots]
$$
where $p/q$ is a positive rational number (resp.~$\xi$ is a positive irrational real
number), $n$ is a nonnegative integer, $a_0$ is a nonnegative integer, and the $a_i$'s 
are positive integers for $i \geq 1$. Note that we allow $a_n=1$ in the first equality.  
If $A=a_1a_2\cdots$ denotes a finite or an infinite word whose letters $a_i$ are positive integers, then the expression 
$[0,A]$ stands for the finite or infinite continued fraction $[0,a_1,a_2,\ldots]$. Also, if $A=a_1a_2\cdots a_n$ is a finite word, we 
let $A^R:=a_na_{n-1}\cdots a_1$ denote the reversal of $A$. As in the previous section, we use $\vert A\vert$ 
to denote the length of the finite word $A$.

The following elementary result was first discovered by Mend\`es France \cite{MF}\footnote{The folding lemma is an avatar of the so--called 
mirror formula, another very useful elementary identity for continued fractions, which is the object of the survey \cite{AdAl}. 
Many references to work related
to these two identities can be found in \cite{AdAl}.}.

\begin{thmF}
 Let $c, a_0, a_1, \ldots,a_n$ be positive integers.
Let $p_n/q_n := [a_0, a_1, \cdots, a_n]$. Then
\begin{equation}\label{fold}
\frac{p_n}{q_n} + \frac{(-1)^n}{c q_n^2}
= [a_0, a_1, a_2, \cdots, a_n, c, -a_n, -a_{n-1}, \cdots, -a_1].
\end{equation}
\end{thmF}

For a proof of the folding lemma, see, for instance, \cite[p.\ 183]{AS}. 
In Equality~(\ref{fold}) negative partial quotients occur. However, we have two simple rules 
that permit to get rid of these forbidden partial quotients:
\begin{equation}\label{eq: rule1}
[\ldots,a,0,b,\ldots] = [\ldots,a+b,\ldots]
\end{equation}
and 
\begin{equation}\label{eq: rule2}
[\ldots,a, -b_1,  \cdots, -b_r] = [,\ldots,a-1,1,b_1-1,b_2,\ldots,b_r]  .
\end{equation}

As first discovered independently by Shallit \cite{Sh1,Sh2} and Kmo\v{s}ek \cite{Km}, 
the folding lemma can be used to describe 
the continued fraction expansion 
of some numbers having a lacunary expansion in an integer base, such as $\kappa$.  
Following  Theorem 11 in \cite{Sh1},  we give now a complete description of the continued fraction expansion of $2\kappa$. 
The choice of $2\kappa$ instead of $\kappa$ is justified by obtaining  a nicer formula.

\begin{thmSh1}
Let $A_1:=1112111111$ and $B_1:=11121111$. For every positive integer $n$, 
let us define the finite words 
$A_{n+1}$ and $B_{n+1}$ as follows: 
$$
A_{n+1} = A_n12 (B_n)^R 
$$
and 
$$
B_{n+1} \mbox{ is  the prefix of } A_{n+1} \mbox{ with length }\vert A_{n+1}\vert -2  .
$$  
Then the sequence of words 
$A_n$ converges to an infinite word 
$$
A_{\infty} = 111 2 111111 12 1111 2 111 12 \cdots
$$
and 
 $$
 2\kappa = [1,A_{\infty}] =[1,1,1,1,2,1,1,1,1,1,1,1,2,1,1,1,1,2,1,1,1,1,2,\ldots]  .
 $$
 \end{thmSh1}

In particular, the partial quotients of $2\kappa$ take only the values $1$ and $2$. This shows that $\kappa$ is badly 
approximable by rationals and {\it a fortiori} that the transcendence of $\kappa$  is beyond the scope of Roth's theorem 
(the $p$-adic version of Roth's theorem 
was thus really needed in Section \ref{section: roth}).

\begin{proof} 
First note that by the
definition of $A_{n+1}$, the word $A_n$ is a prefix of $A_{n+1}$, for every nonnegative integer $n$, 
which implies that the sequence of finite words converges (for the usual topology on words)  to an infinite word $A_{\infty}$.

For every integer $n\geq 0$, we set 
$$
\frac{P_n}{Q_n} := \sum_{k=0}^{n} \frac{2}{2^{2^k}}  \cdot
$$
We argue by induction to prove that 
\begin{equation*}
P_n/Q_n = [1,A_{n-2}]  ,
\end{equation*}
for every integer $n\geq 3$. We first note that $P_3/Q_3=[1,1,1,1,2,1,1,1,1,1,1]=[1,A_1]$. 
Let $n\geq 3$ be an integer and let us assume that $P_n/Q_n=[1,A_{n-2}]$. 
By the definition of $P_n/Q_n$, we have that 
\begin{equation}\label{eq: pnqn}
\frac{P_{n+1}}{Q_{n+1}} = \frac{P_n}{Q_n} + \frac{1}{2Q_n^2}  \cdot
\end{equation}
Furthermore, an easy induction shows that for every integer $k\geq 1$, $\vert A_k\vert$ is even and $A_k$ ends 
with $11$ so that 
\begin{equation}\label{eq: 11}
(A_k)^R = 11 (B_k)^R  .
\end{equation}
Since $\vert A_{n-2}\vert$ is even, we can apply the folding Lemma and we infer from Equalities (\ref{eq: pnqn}) and (\ref{eq: 11}), and from 
the transformation rules (\ref{eq: rule1}) and (\ref{eq: rule2}) that 
$$
\begin{array}{lll}
P_{n+1}/Q_{n+1} &=& [1,A_{n-2},2,- (A_{n-2})^R] \\ \\
 &=& [1,A_{n-2},1,1, 0,1,(B_{n-2})^R]\\ \\
&=& [1,A_{n-2},1,2, (B_{n-2})^R] \\ \\
&=& [1,A_{n-1}]  .
\end{array}
$$
This proves that $P_n/Q_n=[1,A_{n-2}]$ for every $n\geq 3$. 
Since the sequence $(P_n/Q_n)_{n\geq 0}$ converges to $2\kappa$ and $A_n$ is always a prefix of $A_{n+1}$, 
we obtain that $2\kappa=[1,A_{\infty}]$, as desired. 
\end{proof}

\subsection{Two applications}  
In the second part of the paper \cite{Sh1}, 
Shallit \cite{Sh2} extends his construction and obtained the following general result\footnote{K\"ohler \cite{Ko} also obtains 
independently almost the same result  
after he studied  \cite{Sh1}. It is also worth mentioning that this result was somewhat anticipated, although written in a rather different form, 
by Scott and Wall in 1940 \cite{SW}.}.

\begin{thmSh2}
 Let $b\geq 2$ and $n_0\geq 0$ be integers and let 
$(c_n)_{n\geq 0}$ be a sequence of positive integers such that  $c_{n+1}\geq 2c_n$, 
for every integer $n\geq n_0$. Set $d_n:=c_{n+1}-2c_n$ and 
$$
S_b(n) := \sum_{k=0}^n \frac{1}{b^{c_k}}  \cdot
$$
If $n\geq n_0$ and $S_b(n)=[a_0,a_1,\ldots,a_r]$, with $r$ even, then 
 $$
S_b(n+1) = [a_0,a_1,\ldots,a_r,b^{d_n}-1,1,a_r-1,a_{r-1},\ldots,a_1]  .
 $$
 \end{thmSh2}

This result turns out to have interesting consequences, two of which are recalled below.

\subsubsection{A question of Mahler about the Cantor set}  Mahler \cite{Mah84} asked the following question: 
how close can irrational numbers in the Cantor set be approximated by rational numbers?
We recall that the {\it irrationality exponent}  of an irrational real number $\xi$, denoted by $\mu(\xi)$, is defined as 
the supremum of the real numbers $\mu$ for which the inequality 
$$
\left\vert \xi - \frac{p}{ q} \right\vert < \frac{1}{q^{\mu}} 
$$
has infinitely many rational solutions $p/q$. Mahler's question may thus  be interpreted as follows:  are there elements in the Cantor set 
with any prescribed irrationality exponent?

This question was first answered positively by Levesley, Salp and Velani \cite{LSV} by means of tools from metric number theory. 
A direct consequence of  Shallit's result is that one can also simply answer Mahler's question 
by providing explicit example of 
numbers in the Cantor set with any prescribed irrationality exponent. 
We briefly outline how to prove this result 
and refer the reader to \cite{Bu08} for more details. Some refinements along the same lines can also be found in \cite{Bu08}.    

Let $\tau\geq 2$ be a real number. 
Note first that the number 
$$
\xi_{\tau} := 2 \sum_{n= 1}^{\infty} \frac{1}{3^{\lfloor \tau^n\rfloor}}
$$
clearly belongs to the Cantor set. Furthermore, the partial sums of $\xi_{\tau}$ provide infinitely many good rational approximations 
which ensure that $\mu(\xi_{\tau})\geq \tau$.   
When $\tau\geq (3+\sqrt 5)/2$, a classical approach based on triangles inequalities allows to 
show that $\mu(\xi_{\tau})\leq \tau$. However, the method fails when $\tau$ satisfies $2\leq \tau<(3+\sqrt 5)/2$.  

In order to overcome this difficulty, we can use repeatedly Theorem Sh2 with $b=3$ and $c_n=\lfloor \tau^n\rfloor$ to obtain   
the continued fraction expansion of $\xi_{\tau}/2$.  Set $\xi_{\tau}/2:=[0,b_1,b_2,\ldots]$ and let  $s_n$ denote the denominator 
of the $n$th convergent to $\xi_{\tau}/2$.  If $\tau=2$, we see  that the partial quotients $b_n$ are bounded, which implies 
$\mu(\xi_2)=\mu(\xi_2/2)=2$, as desired. We can thus assume that $\tau >2$.   
Let us recall that once we know the continued fraction of an irrational number $\xi$, it becomes easy to deduce 
its irrationality exponent.  Indeed, if $\xi=[a_0,a_1,\ldots]$,  it is well-known that  
\begin{equation}\label{eq: mes}
\mu(\xi) =  2 + \limsup_{n\to\infty} \frac{\ln a_{n+1}}{\ln q_n}  ,
\end{equation}
where $p_n/q_n$ denotes the $n$th convergent to $\xi$.   Equality (\ref{eq: mes}) is actually a direct consequence of the inequality
$$
 \frac{1}{(2+a_{n+1})q_n^2}<\left\vert \xi - \frac{p_n}{q_n} \right\vert < \frac{1}{a_{n+1}q_n^2 }
$$
and the fact that the convergents provide the best rational
approximations (see, for instance, \cite[Chapter 6]{Kh}).
When $\tau >2$, the formula given in Theorem Sh2 shows that the large partial quotients\footnote{That is those which are larger than all previous ones.} 
of $\xi_{\tau}/2$ are precisely those equal to $3^{d_n}-1$ which occur first   
at some positions, say $r_n+1$. But then Theorem Sh2 implies  
that $s_{r_n}$ is the denominator of $\sum_{k=1}^n1/3^{\lfloor \tau^k\rfloor}$, that is  $s_{r_n}= 3^{\lfloor \tau^n\rfloor}$. 
A simple computation thus shows that  
$$
\limsup_{n\to\infty} \frac{\ln b_{n+1}}{\ln s_n} = \limsup_{n\to\infty} \frac{\ln (3^{d_n}-1)}{\ln 3^{\lfloor \tau^n\rfloor}}= \tau-2 ,
$$
since $d_n=\lfloor 3^{\tau^{n+1}}\rfloor - 2 \lfloor 3^{\tau^n}\rfloor$. 
Then we infer from Equality (\ref{eq: mes})  that  $\mu(\xi_{\tau})= \mu(\xi_{\tau}/2)= \tau$, as desired.

\subsubsection{The failure of Roth's theorem in positive characteristic} 
We consider now Diophantine approximation in positive characteristic. Let $\mathbb F_p((1/t))$ be the field of Laurent series 
with coefficients in the finite field $\mathbb F_p$, endowed with the natural absolute value $\vert \cdot \vert$ defined at the 
end of Section \ref{section: mahler}. 
In this setting, the approximation of real numbers by rationals is naturally replaced by the approximation of  Laurent series by rational functions.  
In analogy with the real case,  we define the irrationality exponent of $f(t) \in\mathbb F_p((1/t))$, 
denoted by $\mu(f)$, as  
the supremum of the real number $\mu$ for which the inequality 
$$
\left\vert f(t) - \frac{P(t)}{Q(t)} \right\vert < \frac{1}{\deg Q^{\mu}} 
$$
has infinitely many rational solutions $P(t)/Q(t)$.

It is well-known that Roth's theorem fails in this framework. 
Indeed, Mahler \cite{Mah49} remarked that it is even not possible to improve Liouville's bound 
for the power series 
$$
f(t): = \sum_{n=0}^{\infty}t^{-p^n} \in\mathbb F_p[[1/t]]
$$ 
is algebraic over $\mathbb F_p(t)$ with degree $p$, while $\mu(f)=p$.  Osgood \cite{Os} and then 
 Lasjaunias and de Mathan \cite{LaDeM} obtained an improvement of the Liouville bound (namely the Thue bound) 
 for a large class of algebraic functions.  However not much is known about the irrationality exponent of algebraic functions in 
 $\mathbb F_p((1/t))$. For instance, it seems that we do not know whether $\mu(f)=2$ for almost every\footnote{This could mean something like,   
 among algebraic 
 Laurent series $f$ of degree and height at most  $M$, the proportion of those with $\mu(f)=2$ tends to one as $M$ tends to infinity.}  
 algebraic Laurent series in $\mathbb F_p((1/t))$. We also do not know what the set $\mathcal E$ of possible values taken by $\mu(f)$ is precisely when 
 $f$ runs over the algebraic Laurent series.

 In this direction, we mention that it is possible to use an analogue of Theorem Sh2 for power series with coefficients in a finite field 
 (the proof of which is identical). Thakur \cite{Th99} uses such a result in order to exhibit explicit power series 
 $f(t)\in\mathbb F_p[[1/t]]$  with any prescribed 
irrationality measure $\nu\geq 2$, with $\nu$ rational.   In other words, this proves that $\mathbb Q_{\geq 2}\subset \mathcal E$, where 
$\mathbb Q_{\geq 2}:=\mathbb Q\cap [2,+\infty)$. 
These  power series are defined as linear combinations of Mahler-type series which shows that they are algebraic, while 
the analogue of Theorem Sh2 allows us to describe their continued fraction expansion and thus to easily compute the value of $\mu(f)$, as previously.     
Note that this result can also be obtained by considering only continued fractions, as shown independently by Thakur \cite{Th99} and 
Schmidt \cite{Schmidt00}.  It is expected, but not yet proved, that $\mathcal E=\mathbb Q_{\geq 2}$. 
For a recent survey about these questions, we refer the reader to  \cite{Th09}.

\section{Approximation by quadratic numbers}\label{sec: cf1}

A famous consequence of the subspace theorem provides a natural analogue of Roth's theorem in which 
rational approximations are replaced by quadratic ones. More precisely,  
if $\xi$ is an algebraic number of degree at least $3$ and $\varepsilon$ is a positive real number, then the inequality  
\begin{equation}\label{eq: sch}
\left\vert \xi - \alpha\right\vert < \frac{1}{H(\alpha)^{3+\varepsilon}}  ,
\end{equation}
has only finitely many quadratic solutions $\alpha$.  Here $H(\alpha)$ denotes the (naive) height of $\alpha$, 
that is, the maximum of the modulus of 
the coefficients of its minimal polynomial.

In this section, we give our fourth proof of transcendence for $\kappa$ which is obtained as a consequence of Theorem Sh1 
(see Section \ref{section: folding}) 
and Theorem AB2 stated below.   
We observe that some repetitive patterns occur in the continued fraction expansion of $2\kappa$ and then we use them to find infinitely many 
good quadratic approximations $\alpha_n$ to $2\kappa$.  However, a more careful analysis  would show that 
$$
\left\vert 2\kappa - \alpha_n \right\vert \gg \frac{1}{H(\alpha_n)^3}  ,
$$
so that we cannot directly apply (\ref{eq: sch}). Fortunately, the subspace theorem offers a lot of  freedom and 
adding some information about the minimal polynomial of our approximations finally allows us to conclude.

We keep the notation from Sections \ref{section: roth} and \ref{section: folding}.  
Let ${\bf a}=a_1a_2\cdots$ 
be an infinite word and $w\geq 1$ be a real number. 
We say that ${\bf a}$ 
satisfies Condition $(*)_w$ 
if there exists 
a sequence of finite words $(V_n)_{n \ge 1}$ such that the following hold.

\begin{itemize}

\item[{\rm (i)}] For any $n \ge 1$, the word $V_n^w$ is a prefix
of the word ${\bf a}$.

\item[{\rm (ii)}] The sequence $(\vert V_n\vert)_{n \ge 1}$ is 
increasing.
\end{itemize}

The following result is a special instance of Theorem 1 in \cite{AdBuActa}.

\begin{thmAB2}
Let $(a_n)_{n \ge 1}$ be a bounded sequence of positive integers such that ${\bf a}=a_1a_2\cdots$ 
satisfies Condition $(*)_w$ for some real number $w>1$.
Then the real number
$$
\xi:= [0, a_1, a_2,  \ldots]
$$ 
is either quadratic or transcendental.
\end{thmAB2}

We only outline the main idea of the proof and refer the reader to \cite{AdBuActa} for more details. 

\begin{proof}[Hint of proof]
Assume that the parameter $w > 1$ is fixed, as well as the 
sequence $(V_n)_{n \ge 1}$ occurring in the
definition of Condition $(*)_w$. 
Set also $s_n:=\vert V_n\vert$. 
We want to prove that the real number
$$
\xi:= [0, a_1, a_2, \ldots]
$$ 
is either quadratic or transcendental. We assume that $\alpha$ is algebraic of degree at
least three and we aim at deriving a contradiction.

Let $p_n/q_n$ denote the $n$th convergent  
to $\xi$. 
The key fact for the proof  is the observation
that $\xi$ has infinitely many good quadratic approximations 
obtained by truncating its continued fraction
expansion and completing by periodicity. 
Let $n$ be a positive integer and let us define the quadratic number $\alpha_n$ 
as having the following purely periodic 
continued fraction expansion: 
$$
\alpha_n:= [0, V_nV_nV_n \cdots ]  .
$$
Then
$$
\left\vert \xi -\alpha_n\right\vert < \frac{1}{q_{\lfloor ws_n\rfloor^2}}  ,
$$
since by assumption the first $\lfloor ws_n\rfloor$ partial quotients of $\xi$ and $\alpha_n$ are the same.
Now observe that $\alpha_n$ is a root of the quadratic polynomial
$$
P_n (X) := q_{s_n-1} X^2 + (q_{s_n} - p_{s_n-1}) X - p_{s_n}  .
$$
By Rolle's theorem, 
we have
$$
\vert P_n (\xi)\vert = \vert P_n (\xi) - P_n (\alpha_n)\vert 
\ll  q_{s_n}  |\xi - \alpha_n| \ll  \frac{q_{s_n} }{ q_{\lfloor w s_n\rfloor}^2}  \cdot
$$
Furthermore, by the theory of continued fractions we also have  
$$
\vert q_{s_n} \xi - p_{s_n}\vert  \le \frac{1}{q_{s_n}}   \cdot
$$
Consider the four linearly independent linear forms with real algebraic coefficients:
$$
\begin{array}{lll}
L_1(X_1, X_2, X_3, X_4)& = & \xi^2 X_2 + \xi (X_1 - X_4) - X_3, \\ 
L_2(X_1, X_2, X_3, X_4) &= & \xi X_1 - X_3, \\ 
L_3(X_1, X_2, X_3, X_4) &= & X_1, \\ 
L_4(X_1, X_2, X_3, X_4) &= & X_2. 
\end{array}
$$
Evaluating them on the integer quadruple 
$(q_{s_n}, q_{s_n-1}, p_{s_n}, p_{s_n-1})$, a simple computation using continuants 
shows that
$$
\prod_{1 \le j \le 4}  |L_j (q_{s_n}, q_{s_n-1}, p_{s_n}, p_{s_n-1})|
\ll q_{s_n}^2  q_{\lfloor w s_n\rfloor}^{-2} < \frac{1}{q_{s_n}^{\varepsilon}}  ,
$$
 for some positive number $\varepsilon$, when $n$ is large enough. 
We can thus apply the subspace theorem. We obtain  
that all the integer points $(q_{s_n}, q_{s_n-1}, p_{s_n}, p_{s_n-1})$, $n\in\mathbb N$, 
belong to  a finite number of proper subspaces of $\mathbb Q^4$. 
After some work, it can be shown that this is possible only if $\xi$ is quadratic, a contradiction.  
\end{proof}

We are now ready to give a new proof of transcendence for $\kappa$.

\begin{proof}[Fourth proof] 
Shallit \cite{Sh1} proves that the continued fraction 
of $\kappa$ is not ultimately periodic.  Here we include, for the sake of completeness, a similar proof that the word $A_{\infty}$ is not ultimately periodic. 
Set $A_{\infty}:=a_1a_2\cdots$.  
We argue by contradiction assuming that $A_{\infty}$ is ultimately periodic. 
There thus exist two positive integers $r$ and $n_0$ such that 
\begin{equation}\label{eq: per}
a_{n+jr} = a_{n}  ,
\end{equation} 
for every $n\geq n_0$ and $j\geq 1$.

For every positive integer $i$, set $k_i:= \vert A_i\vert = 10\cdot 2^{i-1}$.  
Let us fix a positive integer $n$ such that $k_n \geq r+ n_0$.   
Theorem Sh1 implies that $A_{\infty}$ begins with
$$
A_{n+1} = a_1\cdots a_{k_n-2}a_{k_n-1}a_{k_n}  12  a_{k_n-2}\cdots a_1 
$$
and since $A_n$ always ends with $11$ (see (\ref{eq: 11})), we obtain that 
\begin{equation}\label{eq: 1112}
   A_{n+1}=a_1\cdots a_{k_n-2} 11 12  a_{k_n-2}\cdots a_1  .
\end{equation}
We thus have
\begin{equation}\label{eq: sym}
a_{k_n+x+1} = a_{k_n-x}  ,
\end{equation}
for every integer $x$ with $2\leq x \leq k_n-1$. 
Since the word $1112$ occurs infinitely often 
in $A_{\infty}$, we see that $r\geq 4$.  We can thus choose $x=r-2$ in (\ref{eq: sym}), 
which gives that
$$
a_{k_n+r-1} = a_{k_n-r+2}  .
$$
Then (\ref{eq: per}) implies that 
$$
a_{k_n+r-1}  = a_{k_n-1} 
$$
and 
$$
a_{k_n-r+2} =a_{k_n +2} . 
$$
Finally, we get  that 
$$
a_{k_n-1}  = a_{k_n +2} ,
$$
that is 
$$
1=2 ,
$$
a contradiction.  Hence, $A_{\infty}$ is not ultimately periodic.

Now, set
$$
\xi := \frac{4\kappa-3}{2-2\kappa}  \cdot
$$
Clearly it is enough to prove that $\xi$ is transcendental.  
By Theorem Sh1, we have that 
$$
2\kappa =[1,A_{\infty}] =  [1,a_1,a_2,\ldots] \
$$
and a simple computation shows that 
$$
\xi = [0,a_3,a_4,\cdots]  .
$$
Let us show that the infinite word $a_3a_3\cdots$ satisfies Condition $(*)_{1+3/10}$.  
Let $n$ be a positive integer.  
Using the definition of  $A_{n+2}$, we infer from  (\ref{eq: 1112}) that  
$$
A_{n+2} = a_1\cdots a_{k_n-2}  1112  a_{k_n-2}\cdots a_1  12  a_3a_4\cdots  a_{k_n-2} 2 111a_{k_n-2}\cdots a_1
$$
Setting  $V_n := a_3a_4\cdots  a_{k_n-2} 1112  a_{k_n-2}\cdots a_112$, we obtain that $A_{n+2}$ begins with 
$$
a_1a_2 V_n^{1+ (k_n-4)/2k_n}  .
$$
Since $k_n\geq 10$, we thus deduce that $A_{\infty}$ begins with $a_1a_2V_n^{1+3/10}$, for every integer $n\geq 1$. 
This shows that the infinite word $a_3a_4\cdots$ 
satisfies Condition $(*)_{1+3/10}$.  
Applying Theorem AB2, we obtain that $\xi$ is either quadratic or transcendental. However, we infer from 
Lagrange's theorem that $\xi$ cannot be quadratic, for we just have shown that the word $a_3a_4\cdots$ is not ultimately periodic. 
Thus $\xi$ is transcendental, concluding the proof. 
\end{proof}

\subsection{Beyond this proof} 
Very little is known regarding the size of the partial quotients
of algebraic real numbers of degree at least three. From  
numerical evidence and a belief that these numbers behave 
like most of the numbers in this respect, 
it is often conjectured that their partial quotients form an unbounded 
sequence.  
Apparently, Khintchine \cite{Kh} 
was the first to consider this question 
(see \cite{All00,Sha,MIW}
for surveys including a discussion of this problem). Although almost 
nothing has been proved yet in this direction, Lang \cite{LangSMF,Lang} made some more general 
speculations, including the fact that 
algebraic numbers of degree at least three should behave like most 
of the numbers with respect to the Gauss--Khintchine--Kuzmin--L\'evy laws. 
This conjectural picture is thus very similar to the one encountered in Section \ref{section: roth} 
regarding the expansion of algebraic irrational numbers in integer bases.

As a first step, it is worth proving that 
real numbers with a ``too simple" continued fraction expansion
are either quadratic or transcendental.  Of course, the term ``simple" can  lead to many interpretations. 
It may, for instance, denote real numbers whose continued fraction expansion can be produced in a simple
algorithmical, dynamical, combinatorial or arithmetical way. 
In any case, it is reasonable to expect that such expansions should be close to be periodic.  
In this direction, there is a long tradition of using 
an excess of periodicity to prove the transcendence of 
some continued fractions (see, for instance, \cite{ADQZ,Baker62,Bax,Dav1,Dav2,Dav3,Maillet,Quef1,Quef2}).  
Adamczewski and Bugeaud  \cite{AdBuActa} use the freedom offered by the subspace theorem 
to prove some combinatorial
transcendence criteria, which provide significant improvements of those previously obtained in \cite{ADQZ,Dav1,Dav2,Quef1,Quef2}.  
Some transcendence measures for such continued fractions  are also recently given in \cite{AdBuJEMS,Bu3}, following the general approach 
developed in \cite{AdBuPLMS} (see also \cite{AdBuCrelle} for similar results related to integer base expansions).

 Theorem AB2 gives the transcendence of non-quadratic real numbers $\xi$  
whose continued fraction expansion begins with arbitrarily  long blocks of partial quotients of the form $V_n^{1+\varepsilon}$, for some positive 
$\varepsilon$. 
The key fact in the proof to apply the subspace theorem is to see that 
the linear form 
$$
 \xi^2 X_2 + \xi (X_1 - X_4) - X_3
$$ 
takes small values at the integer quadruples $(q_{s_n}, q_{s_n-1}, p_{s_n}, p_{s_n-1})$, 
where $s_n=\vert V_n\vert$ and where $p_n/q_n$ denotes the $n$th convergent to $\xi$. 
This idea can be naturally generalized to the important case where the repetitive patterns do not occur at the very beginning of the expansion. 
Indeed, if the continued fraction expansion of $\xi$ begins with a block of partial quotients of the form $U_nV_n^{1+\varepsilon}$, 
then $\xi$ is well approximated by the quadratic number $\alpha'_n:=[0,U_nV_nV_nV_n\cdots]$.  
As shown in \cite{AdBuActa}, when dealing 
with these more general patterns, we can argue similarly, but we have now to estimate 
the linear form 
$$
L(X_1,X_2,X_3,X_4):= \xi^2X_1-\xi(X_2+X_3)+X_4 
$$
at the integer quadruples  
$$\begin{array}{c}
{\bf x}_n:= (q_{r_n-1}q_{r_n+s_n}-q_{r_n}q_{r_n+s_n}, q_{r_n-1}p_{r_n+s_n}-q_{r_n}p_{r_n+s_n-1}, \hspace{3cm}\\
\hspace{2cm}p_{r_n-1}q_{r_n+s_n}-p_{r_n}q_{r_n+s_n-1}, p_{r_n-1}p_{r_n+s_n}-p_{r_n}p_{r_n+s_n-1})  ,
\end{array}
$$
where $r_n:=\vert U_n\vert$, $s_n:=\vert V_n\vert$, and $p_n/q_n$ denotes the $n$th convergent to $\xi$. 
In a recent paper, Bugeaud \cite{Bu12} remarks that the quantity $\vert L({\bf x}_n)\vert$ was overestimated in the proof given in  
\cite{AdBuActa}\footnote{The authors of \cite{AdBuActa} use that 
$\vert L({\bf x}_n)\vert  \ll q_{r_n} q_{r_n+s_n} q_{ r_n \lfloor (1+\varepsilon) s_n \rfloor }^{-2}$ while it is actually elementary to see that 
$\vert L({\bf x}_n)\vert  \ll q_{r_n}^{-1} q_{r_n+s_n} q_{ r_n \lfloor (1+\varepsilon) s_n \rfloor }^{-2}$.}. 
Taking into account this observation, the method developed in \cite{AdBuActa} has much striking consequences than those 
initially announced. As an illustration, we now have that the continued fraction expansion of an algebraic number of 
degree at least $3$ cannot be generated by a finite automaton.  

\section{Simultaneous approximation by rational numbers}\label{sec: cf2}

Another classical feature of the subspace theorem is that it can also deal with simultaneous approximation 
of several real numbers by rationals. Our last proof of the transcendence of $\kappa$ relies on this principle. 
We use the occurrences of some symmetric patterns in the continued fraction expansion of $2\kappa-1$ 
to find good simultaneous rational approximations of $2\kappa-1$ and $(2\kappa-1)^2$.

\begin{proof}[Fifth proof]
We keep the notation of Sections \ref{section: folding} and \ref{sec: cf1}. 
Set $\xi:=(2\kappa-1)$. Clearly it is enough to prove that $\xi$ is transcendental.  
We argue by contradiction assuming that $\xi$ is algebraic.

By Theorem Sh1, we have that  $\xi=[0,A_{\infty}]= [0,a_1,a_2,\ldots]$.  
We let $p_n/q_n$ denote the $n$th convergent to $\xi$. 
We also set as previously $k_n:=\vert A_n\vert$. 
By the theory of 
continued fraction, we have 
$$
\left\vert \xi - \frac{ p_{k_n-1} }{ q_{k_n-1} } \right\vert < \frac{1}{q_{k_n-1}^2}  
\mbox{ and } 
\left\vert \xi - \frac{ p_{k_n} }{ q_{k_n} } \right\vert < \frac{1}{q_{k_n}^2}  \cdot
$$
Let us also recall the so-called mirror formula (see, for instance, \cite{AdAl}): 
\begin{equation}\label{eq: mirror}
\frac{q_{n-1}}{q_{n}} =[0,a_n,\ldots,a_1]  .
\end{equation}
Since $A_{n} = B_{n-1}  1112  (B_{n-1})^R$, we have  
$$
\frac{q_{k_n-1}}{q_{k_n}} =[0,B_{n-1},2,1,1,1, (B_{n-1})^R] 
$$
and a simple computation using continuants shows that\footnote{Indeed, the theory of continuants implies that the denominator of the rational number $[0,B_{n-1}]$, 
say $r_n$, grows roughly $\sqrt{q_{k_n}}$ and thus $r_n^{-2}$ essentially behaves like $q_{k_n}^{-1}$. For more details about continuants, see, for instance, \cite[Sec.\,8.2]{AdBuCant},
\cite[Sec.\ 5]{AdBuCrelle1},
or \cite[Sec.\,3]{AdBuJLMS}. } 
$$
\left\vert \xi - \frac{q_{k_n-1}}{q_{k_n}} \right\vert \ll \frac{1}{q_{k_n}} 
 \mbox{ and } 
\left\vert \frac{p_{k_n}}{q_{k_n}} - \frac{q_{k_n-1}}{q_{k_n}} \right\vert \ll \frac{1}{q_{k_n}}   \cdot
$$
Then we have 
$$
\begin{array}{lll}
\left\vert \xi^2 - \frac{ p_{k_n-1} }{ q_{k_n} } \right\vert &=& 
\left\vert \xi^2 - \frac{ p_{k_n-1} }{ q_{k_n-1} } \cdot \frac{ q_{k_n-1} }{ q_{k_n} } \right\vert \\ \\
&=& \left\vert \left (\xi-\frac{ p_{k_n-1} }{ q_{k_n-1} }\right)\left(\xi+\frac{ q_{k_n-1} }{ q_{k_n} } \right) 
+ \xi\left(\frac{ p_{k_n-1} }{ q_{k_n-1} }- \frac{ q_{k_n-1} }{ q_{k_n} } \right)\right\vert \\ \\
& \ll&  q_{k_n}^{-1}  \cdot
 \end{array}
$$

Consider the four linearly independent linear forms with real algebraic coefficients:
$$
\begin{array}{lll}
L_1(X_1, X_2, X_3, X_4)& = & \xi^2 X_1 - X_4, \\ 
L_2(X_1, X_2, X_3, X_4) &= & \xi X_1 - X_3, \\ 
L_3(X_1, X_2, X_3, X_4) &= & \xi X_2-X_4, \\ 
L_4(X_1, X_2, X_3, X_4) &= & X_2. 
\end{array}
$$
Evaluating them on the integer quadruple 
$(q_{k_n}, q_{k_n-1}, p_{k_n}, p_{k_n-1})$, our previous 
estimates implies that 
$$
\prod_{1 \le j \le 4}  \left\vert L_j (q_{k_n}, q_{k_n-1}, p_{k_n}, p_{k_n-1})\right\vert 
\ll  \frac{1}{q_{k_n}}  \cdot
$$
The subspace theorem thus implies   
that all the integer points $(q_{k_n}, q_{k_n-1}, p_{k_n}, p_{k_n-1})$, $n\geq 1$, 
belong to  a finite number of proper subspaces of $\mathbb Q^4$.  
Thus, there exists a nonzero integer quadruple $(x,y,z,t)$ such that 
$$
q_{k_n}x+  q_{k_n-1}y+ p_{k_n}z+ p_{k_n-1}t = 0 ,
$$
for all $n$ in an infinite set of positive integers $\mathcal N$. 
Dividing by $q_{k_n}$ and letting $n$ tend to infinity along $\mathcal N$, we obtain that 
$$
x+ \xi(y+z) + \xi^2 t =0 .
$$
Since $A_{\infty}$ is not ultimately periodic (see Section \ref{sec: cf1}), it follows from Lagrange's theorem 
that $\xi$ is not quadratic and thus  $x=t=(y+z)=0$. 
This gives that $q_{k_n-1} = p_{k_n}$ for every $n\in\mathcal N$.  
But
$$
\frac{q_{k_n-1} }{q_{k_n}} = [0,B_n,2,1,1,1, (B_n)^R] 
\not= [0,B_n,1,1,1,2, (B_n)^R] = \frac{p_{k_n} }{q_{k_n}}  ,
$$
a contradiction.
\end{proof}

\subsection{Beyond this proof} 
As already mentioned in Section \ref{sec: cf1}, there is a long tradition in using 
an excess of periodicity to prove the transcendence of 
some continued fractions. 
The fact that occurrences of symmetric patterns can actually 
give rise to transcendence statements is more surprising. 
The connection between palindromes\footnote{Recall that a palindrome 
is a word $W=a_1\cdots a_n$ that is equal to its reversal $W^R=a_n\cdots a_1$. Thus 
a palindrome may be considered as a perfect symmetric pattern.} 
in continued fractions and simultaneous approximation of a number and its square 
is reminiscent of works of Roy about extremal numbers \cite{Roy03bis,Roy03,Roy04} (see also \cite{Fi}).  
Inspired by this original discovery of Roy, Adamczewski and Bugeaud  \cite{AdBuFourier}  use 
 the subspace theorem and the mirror formula (\ref{eq: mirror}) 
to establish several combinatorial transcendence criteria for continued fractions involving symmetric patterns  
(see also \cite{Bu12} for a recent improvement of one of these criteria).    
The same authors  \cite{AdBuJEMS} also give some transcendence measures for such continued fractions.   
 As a simple illustration, they prove in  \cite{AdBuFourier} that if the continued fraction of 
 a real number  begins with arbitrarily large palindromes, then this number is either quadratic or transcendental.

 It is amusing to mention that in contrast there are only very partial results 
 about the transcendence of numbers whose decimal expansion involves 
 symmetric patterns (see \cite{AdBuIMRN}).  In particular, it is not known whether a real number whose decimal expansion begins with arbitrarily 
 large palindromes is either rational or transcendental. 
 
 Beyond the study of extremal numbers and transcendence results, Adamczewski and Bugeaud  \cite{AdBuJLMS} use continued fractions with 
 symmetric patterns  to provide explicit examples for the famous Littlewood conjecture on simultaneous Diophantine approximation.
 
\section{Acknowledgments}
 
 This work was supported by the project Hamot, ANR 2010 BLAN-0115-01.  
The idea to make this survey came up with a talk  I gave for the conference 
\emph{Diophantine Analysis and Related Fields} that was held in Tokyo in March 2009. I would like to take the opportunity to thank 
the organizers of this conference and especially Noriko Hirata-Kohno. I would also like to thank  the anonymous referees for their useful comments. 
My interest for transcendence grew quickly after I followed some lectures by Jean-Paul Allouche when I was a 
Ph.D. student.  I am deeply indebted to Jean-Paul for giving such stimulating lectures!

\end{document}